\numberwithin{equation}{section}
\theoremstyle{plain}
\newtheorem{theorem}{Theorem}[section]
\newtheorem{lemma}[theorem]{Lemma}
\newtheorem{proposition}[theorem]{Proposition}
\theoremstyle{definition}
\newtheorem{definition}[theorem]{Definition}
\begin{document}
  \numberwithin{equation}{section}
\title[High-Friction Limit to Bipolar Drift-Diffusion]{The relaxation limit of bipolar fluid models}

\author{Nuno J. Alves}
\address[Nuno J. Alves]{
        King Abdullah University of Science and Technology, CEMSE Division, Thuwal, Saudi Arabia, 23955-6900.}
\email{nuno.januarioalves@kaust.edu.sa}

\author{Athanasios E. Tzavaras}
\address[Athanasios E. Tzavaras]{King Abdullah University of Science and Technology, CEMSE Division, Thuwal, Saudi Arabia, 23955-6900.}
\email{athanasios.tzavaras@kaust.edu.sa}

\keywords{relative energy method, relaxation limit, bipolar Euler-Poisson, bipolar drift-diffusion}

\date{\today}

\begin{abstract}
This work establishes the relaxation limit from the bipolar Euler-Poisson system to the bipolar drift-diffusion system,
for data so that the latter has a smooth solution.
A relative energy identity is developed for the bipolar fluid system and is used to show that a dissipative weak solution of the bipolar Euler-Poisson system converges in the high-friction regime to a strong and bounded away from vacuum solution of the bipolar drift-diffusion system.

\end{abstract}

\maketitle


\section{Introduction}
This article studies the emergence of the bipolar drift-diffusion system \\
\begin{equation} \label{BDD1}
\begin{dcases}
\rho_t=\nabla \cdot \Big(\rho \nabla\frac{\delta \mathcal{E}}{\delta \rho}\Big) \\
n_t=\nabla \cdot \Big(n\nabla\frac{\delta \mathcal{E}}{\delta n}\Big)\\
 -\Delta \phi = \rho - n
\end{dcases}                                                                                                                                                                                                                                                        \end{equation} 
\begin{equation*}
\rho \nabla \frac{\delta \mathcal{E}}{\delta \rho} \cdot \nu = n\nabla\frac{\delta \mathcal{E}}{\delta n} \cdot \nu = \frac{\partial \phi}{\partial \nu} = 0, \ \ \ \text{on} \ [0,T[ \times \partial \Omega
\end{equation*} \\
as a relaxation limit of the bipolar Euler-Poisson system: \\
\begin{equation} \label{BEP1}
\begin{dcases} 
 \rho_t + \nabla \cdot (\rho u) = 0 \\ 
 (\rho u)_t + \nabla \cdot (\rho u \otimes u) = -\frac{1}{\varepsilon} \rho \nabla \frac{\delta \mathcal{E}}{\delta \rho} -\frac{1}{\varepsilon} \rho u \\ 
 n_t + \nabla \cdot (n v) = 0  \\ 
 (nv)_t + \nabla \cdot (nv \otimes v) = -\frac{1}{\varepsilon} n \nabla\frac{\delta \mathcal{E}}{\delta n} -\frac{1}{\varepsilon} nv \\ 
 -\Delta \phi = \rho - n
 \end{dcases}
\end{equation}
\begin{equation*}
u \cdot \nu = v \cdot \nu = \frac{\partial \phi}{\partial \nu} = 0, \ \ \ \text{on} \  [0,T[ \times \partial \Omega
\end{equation*} \\
in the space-time domain $]0,T[ \times \Omega$, where $T>0$ is a fixed time horizon and $\Omega$ is a smooth bounded domain of $\mathbb{R}^d$ with smooth boundary $\partial \Omega$, where $d \in \mathbb{N} \setminus \{1,2 \}$. 
The systems are expressed using the formalism of Euler flows in \cite{GLT}, based on the functional
\begin{align}
\label{bipen}
\mathcal{E}(\rho,n) \coloneqq \int_\Omega &h_1(\rho)+h_2(n)+ \tfrac{1}{2}|\nabla \phi|^2dx, 
\\
\label{poisson}
-\Delta \phi &= \rho -n.
\end{align}
There is a coupling of the two densities via the Poisson equation \eqref{poisson}, and 
$\dfrac{\delta \mathcal{E}}{\delta \rho}$, $\dfrac{\delta \mathcal{E}}{\delta n} $ stand for the functional derivatives of \eqref{bipen} which are given by
\begin{equation} \label{bipderiv}
\begin{aligned}
\dfrac{\delta \mathcal{E}}{\delta \rho}(\rho,n) = h_1^\prime(\rho) + \phi \, , \quad 
\dfrac{\delta \mathcal{E}}{\delta n}(\rho,n) = h_2^\prime(n) - \phi. 
\end{aligned}
\end{equation}

The models \eqref{BDD1} and \eqref{BEP1} (subject to \eqref{bipen}-\eqref{poisson}) describe two charged fluid systems interacting through 
an electrostatic potential, and are basic models for applications in semiconductor devices or plasma physics, \cite{transportjungel,markowich},\cite[Chapter 3, Subsection 3.3.7]{chenFF}.
Our objective is to describe the
relation between the two models, thus extending the framework of convergence developed in \cite{gasdynamics} for a single fluid system. 
The technical tool for the comparison is a relative energy identity for the bipolar fluid models considered here. 
The functions $h_1,h_2$ represent the internal energies of the fluids, and the electrostatic potential $\phi$ is obtained from the fluid densities $\rho,n$ via the elliptic equation $-\Delta \phi = \rho -n$.  The solution of the Poisson equation is expressed as $$\phi(t,x) = \big(N*(\rho - n)\big)(t,x) \coloneqq \int_\Omega N(x,y) \big(\rho(t,y) - n(t,y) \big)dy,$$ and its spatial gradient is understood as 
$$\nabla \phi(t,x) = \big( \nabla_x N*(\rho - n)\big)(t,x) \coloneqq \int_\Omega \nabla_x N(x,y) \big(\rho(t,y) - n(t,y) \big)dy,$$
where $N$ is the Neumann function \cite{carloskenig}. 
Using the symmetry of $N,$ one derives the formulas \eqref{bipderiv} for the functional derivatives 
$\frac{\delta \mathcal{E}}{\delta \rho}$, $\frac{\delta \mathcal{E}}{\delta n} $. 
Introducing \eqref{bipderiv} to \eqref{BEP1} leads to  \eqref{BEP},
where $p_1,p_2$ are the pressures connected to the internal energies via the usual thermodynamic formulas (\ref{thermoconsistency}).
The formal relaxation limit of \eqref{BEP1} is the system \eqref{BDD1}; establishing this limit is the objective
of the present work.

\par
Relaxation problems arise in physics and chemistry, and from a mathematical viewpoint they have been analysed in several contexts. 
Compensated compactness methods have been used to perform the relaxation limit of single-species hydrodynamic models towards a drift-diffusion equation in one and three  spatial dimensions \cite{natalini,lattanzio}.  We refer to \cite{DD1996} for an interesting analysis leading to existence of weak solutions for the bipolar Euler-Poisson in one-space dimension.
\par

The relative energy method is used here to perform this limiting process for strong solutions of \eqref{BDD1} in several space dimensions. This approach was successful for the relaxation limit in single-species fluid models \cite{gasdynamics,carrillo}, as well as for certain (weakly coupled through friction) multicomponent systems \cite{xiaokai}. 
The relative energy method provides an efficient mathematical mechanism for stability analysis and establishing limiting processes; see \cite{dafermus2} for early developments, \cite{diffusiverelax3,gasdynamics, lattanziothanos2013} and references therein for applications to diffusive relaxation.
Here, the bipolar fluid models are considered in a bounded domain, which is closer to the actual physical situation and requires handling the boundary conditions. No-flux boundary conditions are applied to the velocities for (\ref{BEP1}), to the fluxes for (\ref{BDD1}), and to the electric field for both systems. 

\par 
In order to compare a solution of  (\ref{BEP1}) with a solution of (\ref{BDD1}), one calculates the evolution of  a relative energy functional; 
the formal calculation is presented in subsection \ref{sec3}.  The main convergence result is stated in section \ref{sec4}, and the convergence analysis is 
carried out in section \ref{sec5}.  
This comparison is effected between a dissipative weak solution of (\ref{BEP1}) and a classical and bounded away from vacuum solution of (\ref{BDD1}); the
precise hypotheses are stated in section \ref{sec4}. 
The solution of (\ref{BDD1}) is regarded as an approximate solution of (\ref{BEP1}) and the relative energy identity in the relevant regularity class
is done in Proposition \ref{relativeentropy}.
The technical part amounts to bound the error terms in the relative energy identity. 
The term requiring attention is the one associated with the electric field. Due to the antisymmetry of the electric charges and the fact that the velocities of the fluids are distinct, one cannot reproduce the argument of \cite{gasdynamics} to simplify this electric field term. The desired bound is reached using results on Riesz potentials \cite{stein} 
and Neumann functions \cite{carloskenig}. A Gronwall inequality then yields the relaxation convergence as a stability result. 
The latter is the main result  of this work, Theorem \ref{mainresult},  and shows that if a strong solution of (\ref{BDD1}) is bounded away from vacuum and 
the initial data converge at the initial time then this convergence is preserved for all times $t \in [0,T[.$

\section{Bipolar fluid models}\label{sec:bipolardd}
The systems of equations considered in this article describe the dynamics of fluids formed by charged particles. Such models are common in semiconductor devices (electrons and holes), or in modeling of plasmas, and play a significant role in various technological contexts 
related to semiconductors or plasma physics. Both systems can be derived from the semi-classical bipolar Boltzmann model \cite{transportjungel}. 

We introduce the monotone increasing pressure functions 
$
p_1, p_2 \in C^2(]0, +\infty[) \cap C([0,+\infty[)$ which satisfy  $p_i^\prime(r) > 0$ for $r > 0$ and  $i=1,2,$ and are connected to the internal energy functions $h_1, h_2 \in C^3(]0,+\infty[)\cap C([0,+\infty[)$ through the thermodynamic consistency relations
\begin{equation} \label{thermoconsistency}
r h_i^{\prime \prime}(r)=p_i^{\prime}(r), \quad r h_i^{\prime}(r)=p_i(r)+h_i(r),
\end{equation} 
for $r > 0$ and $i=1,2.$
Observe that $h_i^{\prime \prime}(r) > 0$ for $r>0$ and $i=1,2$ corresponds to the monotonicity of the pressures.
\subsection{Bipolar drift-diffusion}
For the energy \eqref{bipen}-\eqref{poisson} the system (\ref{BDD1}) is composed by two drift-diffusion equations for the densities, coupled with a Poisson equation for the electrostatic potential:
\begin{equation} \label{BDD}
\begin{cases}
\rho_t =\nabla \cdot \big( \nabla p_1(\rho) +\rho \nabla \phi \big) 
\\
n_t = \nabla \cdot \big( \nabla p_2(n) - n \nabla \phi \big)
\\
 -\Delta \phi = \rho - n.
\end{cases}                                                                                                                                                                                                                                                        \end{equation} \par
No-flux boundary conditions are considered for the fluxes and for the gradient of the electrostatic potential,
that is,
\begin{equation} \label{boundarycondBDD}
( \nabla p_1 (\rho) + \rho \nabla  \phi ) \cdot \nu = ( \nabla p_2 (n) - n \nabla  \phi )  \cdot \nu  = \frac{\partial \phi}{\partial \nu}=0 \ \text{on} \ [0,T[ \times \partial \Omega,  \ \ \ \int_{\partial \Omega} \phi \ dx = 0.
\end{equation}

The condition $\int_{\partial \Omega} \phi \ dx = 0$ is a normalization condition serving to fix the constant in the electrostatic potential determined by
the Poisson equation with Neumann boundary conditions (see the properties of the Neumann function in subsection \ref{sec:Neumann}). 
System (\ref{BDD}) is provided with non-negative initial data $(\bar{\rho}_0, \bar{n}_0)$ that satisfy 
\begin{equation} \label{initialdcm2}
 \int_\Omega \bar{\rho}_0 \ dx = \int_\Omega \bar{n}_0 \ dx = {\bar M} < +\infty.
\end{equation}
From the structure of the first two equations of system (\ref{BDD}) one observes that condition (\ref{initialdcm2}) is formally preserved for all $t \in [0,T[.$ \par
Drift-diffusion equations are commonly used to model semiconductor devices \cite{markowich}, and existence theories under various boundary conditions applicable to the semiconductor setting can be found in
\cite{GajewskiKroeger, jungel3}.
Drift-diffusion equations incorporate a gradient flow structure induced by the Wasserstein distance \cite{otto},  and its role in the 
bipolar drift-diffusion model is studied in \cite{monsaingeon}. In the one-dimensional case, \cite{DD1996} presents  a theory of weak solutions 
to a bipolar drift-diffusion system as the limit of a scaled sequence of entropy weak solutions of a bipolar Euler-Poisson system, while \cite{DDfrancesco} 
studies the long-time asymptotics.

\subsection{Bipolar Euler-Poisson}\label{sec:bipolar}
The bipolar Euler-Poisson system (\ref{BEP1}) describes the motion of two-species charged isentropic fluids subjected to an electric field.
 The system is formed by a pair of continuity equations for the densities, two momentum equations with friction, and a coupling Poisson equation for the electric field. The friction term is responsible for a damping force that gives rise to energy dissipation. \par
For a theory of existence of weak solutions to a bipolar Euler-Poisson system in the one-dimensional case, refer to \cite{DD1996}. There, compensated compactness is used to establish the existence of weak entropy solutions as the limit of a numerical approximation based on a modified fractional step Lax-Friedrich scheme. There it is also proved that these solutions satisfy $L^\infty$ and $L^2$ bounds.
\par
Regarding the structure of the momenum equations of system (\ref{BEP1}), observe that the frictional coefficient $1/ \varepsilon$ also multiplies the internal energy and electric field terms. From the bipolar Boltzmann-Poisson model with a Lenard-Bernstein collision operator \cite{transportjungel,lenard,markowich}, one formally derives the following system (see appendix for details) 
\begin{equation} \label{EP2}
 \begin{dcases} 
  \rho_t + \nabla \cdot (\rho u) = 0 \\ 
 (\rho u)_t  + \nabla \cdot (\rho u \otimes u) + \nabla p_1(\rho) = - \rho \nabla \phi -\frac{1}{\tau} \rho u  \\ 
 n_t + \nabla \cdot (n v) = 0  \\ 
 (n v)_t + \nabla \cdot (n v \otimes v)+\nabla p_2(n) = n \nabla \phi - \frac{1}{\tau} nv  \\ 
 -\Delta \phi = \rho - n,
 \end{dcases}
\end{equation} 
where $\rho$ and $n$ represent the densities of the fluids, $\rho u$ and $nv$ represent the momentums, $\phi$ stands for the electrostatic potential, and $\tau$ is the collision time. 
The formal limit of this system as $\tau \to 0$ is trivial in the hyperbolic scale. 
In the assumed Eulerian frame of reference, the collision time is usually much smaller than the observational time. 
For this reason one considers the time scaling $\partial / \partial t \to \tau \partial / \partial t$ (the so-called diffusion scaling), so that the observational time is measured in multiple units of the collision time. A change of scale is also applied for the velocities $u^\prime = \dfrac{ u}{\tau},$ $v^\prime = \dfrac{v}{\tau}$. After dropping the primes ($u^\prime \to u$, $v^\prime \to v$) and setting $\varepsilon = \tau^2$ one obtains the system 

\begin{equation} \label{BEP}
 \begin{dcases} 
   \rho_t + \nabla \cdot (\rho u) = 0 \\ 
 (\rho u)_t  + \nabla \cdot (\rho u \otimes u) + \frac{1}{\varepsilon} \nabla p_1(\rho) = - \frac{1}{\varepsilon} \rho \nabla \phi -\frac{1}{\varepsilon} \rho u  \\ 
 n_t + \nabla \cdot (n v) = 0  \\ 
 (n v)_t + \nabla \cdot (n v \otimes v)+\frac{1}{\varepsilon} \nabla p_2(n) = \frac{1}{\varepsilon} n \nabla \phi - \frac{1}{\varepsilon} nv  \\ 
 -\Delta \phi = \rho - n,
 \end{dcases}
\end{equation} 
\\
which is system (\ref{BEP1}) thanks to (\ref{bipderiv}) and (\ref{thermoconsistency}).
The collision time squared $\tau^2=\varepsilon$ is called the momentum relaxation time of system (\ref{BEP}). The limit of system (\ref{BEP}) as $\varepsilon \to 0$ is called the relaxation, overdamped or high-friction limit. 

System (\ref{BEP}) is prescribed with no-flux boundary conditions for the velocities and for the electric field. Precisely, the boundary conditions are
\begin{equation} \label{boundarycondBEP}
 u \cdot \nu = v \cdot \nu =  \frac{\partial \phi}{\partial \nu} = 0 \ \; \; \text{on} \ [0,T[ \times \partial \Omega, \ \ \ \int_{\partial \Omega} \phi \ dx = 0,
\end{equation}
where $\nu$ is an outer normal vector to $\partial \Omega$. Under this setting the fluids do not exit the set $\Omega$ and the system is electrically insulated.

For the initial datum $(\rho_0, u_0, n_0, v_0)$, we assume that  $\rho_0,n_0$ are non-negative and satisfy
\begin{equation} \label{initialdcm}
 \int_\Omega \rho_0 \ dx = \int_\Omega n_0 \ dx = M < +\infty,
\end{equation}
while  $u_0,v_0$ satisfy the no-flux condition at the boundary. 
The continuity equations together with \eqref{boundarycondBEP} imply that 
condition (\ref{initialdcm}) is  preserved for all times $t \in [0,T[.$

\subsection{Relative energy identity for the bipolar fluid system} \label{sec3}

In this section, a relative energy identity for solutions of (\ref{BEP}) is derived. This identity, expression \eqref{REBEPEvol}, is produced by an exact formal calculation using the abstract formalism presented in \cite{GLT}. Then, in Proposition \ref{relativeentropy}, we outline an argument producing the relative energy calculation between a weak dissipative solution of (\ref{BEP}) and a strong and bounded away from vacuum solution of (\ref{BDD}). \par
The potential energy functional that generates the bipolar fluid system is
$$
\begin{aligned}
\mathcal{E}=\mathcal{E}(\rho,n) &= \int_\Omega h_1(\rho) + h_2(n) + \tfrac{1}{2}|\nabla \phi|^2 dx
\\
&= \int_\Omega h_1(\rho) + h_2(n) +  \tfrac{1}{2} (\rho -n)  \big ( N \ast (\rho - n) \big ) dx.
\end{aligned}
$$ 
Using \eqref{thermoconsistency} one can readily see
\begin{equation} \label{rhopstress}
 -\rho \nabla \frac{\delta \mathcal{E}}{\delta \rho}(\rho,n) = \nabla \cdot S_1(\rho) - \rho \nabla \big(N * (\rho - n) \big),
\end{equation}
\begin{equation} \label{npstress}
-n \nabla \frac{\delta \mathcal{E}}{\delta n}(\rho,n) = \nabla \cdot S_2(n) + n \nabla \big(N * (\rho - n) \big),
\end{equation} \\
where $S_1(\rho) = -p_1(\rho)I,$ $S_2(n) = -p_2(n)I$ are the pressure stresses. \par
To develop the relative energy identity for (\ref{BEP}), some preliminary formulas are needed. 
Let $\varphi$ be a vector valued test function, and write the weak forms of the identities (\ref{rhopstress}) and (\ref{npstress})
\begin{equation} \label{weakrhopstress}
 \big< \frac{\delta \mathcal{E}}{\delta \rho} (\rho,n), \nabla \cdot (\rho \varphi)  \big> = - \int_\Omega S_1(\rho) : \nabla \varphi \ dx + \int_\Omega \big(N*(\rho - n) \big) \nabla \cdot (\rho \varphi)dx,
\end{equation}
\begin{equation} \label{weaknpstress}
\big< \frac{\delta \mathcal{E}}{\delta n} (\rho,n), \nabla \cdot (n \varphi)  \big> = - \int_\Omega S_2(n) : \nabla \varphi \ dx - \int_\Omega \big(N*(\rho - n) \big) \nabla \cdot (n \varphi)dx.
\end{equation} \\
Taking the directional derivative of \eqref{weakrhopstress} in the direction $\rho$ and of \eqref{weaknpstress} in the direction $n$ we respectively obtain
\begin{equation} \label{2ndformularho}
\begin{aligned}
 \big< \big<  \frac{\delta^2 \mathcal{E}}{\delta \rho^2}(\rho,n), \ & \big(\nabla \cdot (\rho \varphi), \psi \big)  \big> \big> + \big< \frac{\delta \mathcal{E}}{\delta \rho} (\rho,n), \nabla \cdot (\psi \varphi)  \big> = \\
 =& - \int_\Omega \big< \frac{\delta S_1}{\delta \rho}(\rho), \psi \big> : \nabla \varphi \ dx + \int_\Omega (N*\psi) \nabla \cdot (\rho \varphi) dx \\
 &+\int_\Omega \big(N*(\rho - n) \big) \nabla \cdot (\psi \varphi)dx,
\end{aligned}
\end{equation}

\begin{equation} \label{2ndformulan}
\begin{aligned}
 \big< \big<  \frac{\delta^2 \mathcal{E}}{\delta n^2}(\rho,n), \ & \big(\nabla \cdot (n \varphi), \psi \big)  \big> \big> + \big< \frac{\delta \mathcal{E}}{\delta n} (\rho,n), \nabla \cdot (\psi \varphi)  \big> = \\
 =& - \int_\Omega \big< \frac{\delta S_2}{\delta n}(n), \psi \big> : \nabla \varphi \ dx + \int_\Omega (N*\psi) \nabla \cdot (n \varphi) dx \\
 &-\int_\Omega \big(N*(\rho - n) \big) \nabla \cdot (\psi \varphi)dx,
\end{aligned}
\end{equation} 
where $\psi$ is a scalar test function.
Finally, we need a formula for the mixed functional derivatives of  $\mathcal{E}$, which, for scalar valued test functions $\alpha, \beta$, takes the form:
\begin{equation} \label{crossfunctderiv}
 \big< \big<  \frac{\delta^2 \mathcal{E}}{\delta \rho \delta n}(\rho,n), \ ( \alpha, \beta ) \big> \big> =  \big< \big<  \frac{\delta^2 \mathcal{E}}{\delta n \delta \rho}(\rho,n), \ ( \alpha, \beta ) \big> \big> = - \int_\Omega (N* \beta) \alpha \ dx.
\end{equation} \\
Next, define the relative functional associated with $\mathcal{E}$,  given by the quadratic part of the Taylor series expansion of $\mathcal{E}$.
Given density pairs $(\rho, n)$, $(\bar \rho, \bar n)$, with $\phi= N \ast (\rho -n)$, $\bar \phi = N \ast (\bar \rho - \bar n),$ the relative potential energy functional
 is defined by 
 $$\mathcal{E}(\rho, n | \bar{\rho}, \bar{n}) \coloneqq \mathcal{E}(\rho,n)- \mathcal{E}(\bar{\rho}, \bar{n}) - \big<\frac{\delta \mathcal{E}}{\delta \rho}(\bar{\rho}, \bar{n}) ,\rho - \bar{\rho} \big> - \big<\frac{\delta \mathcal{E}}{\delta n}(\bar{\rho}, \bar{n}) ,n - \bar{n} \big>.
 $$
 A straightforward computation gives
$$\mathcal{E}(\rho, n | \bar{\rho}, \bar{n}) = \int_\Omega h_1(\rho | \bar{\rho}) + h_2(n | \bar{n}) + \tfrac{1}{2} |\nabla (\phi-\bar{\phi})|^2dx,$$
where $h(r| \bar{r}) \coloneqq h(r)-h(\bar{r}) - h^\prime(\bar{r})(r-\bar{r}).$ \par

Now let $(\rho, \rho u, n, nv), \ (\bar \rho, \bar \rho \bar u, \bar n, \bar n \bar v),$ with $\phi = N * (\rho - n), \ \bar \phi = N * (\bar \rho - \bar n),$ be two smooth solutions of (\ref{BEP}). Using the continuity equations one derives 

\begin{equation} \label{formalRE1}
\begin{aligned}
\frac{d}{dt}\mathcal{E}&(\rho , n | \bar \rho, \bar n) = \frac{d}{dt} \Big( \mathcal{E}(\rho,n)-\mathcal{E}(\bar \rho, \bar n) - \big< \frac{\delta \mathcal{E}}{\delta \rho}(\bar \rho, \bar n), \rho - \bar \rho \big>  - \big< \frac{\delta \mathcal{E}}{\delta n}(\bar \rho, \bar n), n - \bar n \big> \Big)
\\
= &\left ( - \big<\frac{\delta \mathcal{E}}{\delta \rho}(\rho,n), \nabla \cdot (\rho u) \big> + \big<\frac{\delta \mathcal{E}}{\delta \rho}(\bar \rho,\bar n), \nabla \cdot (\bar \rho \bar u) \big> + \big<\frac{\delta \mathcal{E}}{\delta \rho}(\rho,n), \nabla \cdot \big(\rho (u-\bar u) \big) \big> \right . \\
& + \big< \big<  \frac{\delta^2 \mathcal{E}}{\delta \rho^2}(\bar \rho,\bar n), \ \big(\nabla \cdot (\bar \rho \bar u), \rho - \bar \rho \big)  \big> \big> + \big< \frac{\delta \mathcal{E}}{\delta \rho} (\bar \rho,\bar n), \nabla \cdot \big((\rho -\bar \rho) \bar u \big)  \big> \\
& \left . + \big< \big<  \frac{\delta^2 \mathcal{E}}{\delta \rho \delta n}(\bar \rho,\bar n), \ \big( \nabla \cdot (\bar \rho \bar u), n - \bar n \big) \big> \big>  \right ) \\
&  - \big< \frac{\delta \mathcal{E}}{\delta \rho}(\rho,n) - \frac{\delta \mathcal{E}}{\delta \rho}(\bar \rho, \bar n),  \nabla \cdot \big(\rho(u-\bar u) \big) \big>   \\
&+ \left ( - \big<\frac{\delta \mathcal{E}}{\delta n}(\rho,n), \nabla \cdot (n v) \big> + \big<\frac{\delta \mathcal{E}}{\delta \rho}(\bar \rho,\bar n), \nabla \cdot (\bar n \bar v) \big> + \big<\frac{\delta \mathcal{E}}{\delta \rho}(\rho,n), \nabla \cdot \big(n (v-\bar v) \big) \big> \right .  \\
& + \big< \big<  \frac{\delta^2 \mathcal{E}}{\delta n^2}(\bar \rho,\bar n), \ \big(\nabla \cdot (\bar n \bar v), n - \bar n \big)  \big> \big> + \big< \frac{\delta \mathcal{E}}{\delta n} (\bar \rho,\bar n), \nabla \cdot \big((n -\bar n) \bar v \big)  \big> \\
&\left .  + \big< \big<  \frac{\delta^2 \mathcal{E}}{\delta n \delta \rho}(\bar \rho,\bar n), \ \big( \nabla \cdot (\bar n \bar v), \rho - \bar \rho \big) \big> \big>  \right ) \\
& - \big< \frac{\delta \mathcal{E}}{\delta n}(\rho,n) - \frac{\delta \mathcal{E}}{\delta n}(\bar \rho, \bar n),  \nabla \cdot \big(n(v-\bar v) \big) \big> \\
=:& \ I_{\rho u}   - \big< \frac{\delta \mathcal{E}}{\delta \rho}(\rho,n) - \frac{\delta \mathcal{E}}{\delta \rho}(\bar \rho, \bar n),  \nabla \cdot \big(\rho(u-\bar u) \big) \big> \\
&+ I_{n v}  - \big< \frac{\delta \mathcal{E}}{\delta n}(\rho,n) - \frac{\delta \mathcal{E}}{\delta n}(\bar \rho, \bar n),  \nabla \cdot \big(n(v-\bar v) \big) \big>
\end{aligned}
\end{equation}\\
where $I_{\rho u}$ and $I_{n v}$ are the terms in parentheses, respectively.

To compute $I_{\rho u},$ one applies formula (\ref{weakrhopstress}) to its first three terms, formula (\ref{2ndformularho}) with $\psi = \rho - \bar \rho,$ $\varphi = \bar u$ to its fourth and fifth terms, and formula (\ref{crossfunctderiv}) with $\alpha = \nabla \cdot (\bar \rho \bar u),$ $\beta = n - \bar n$ to its last term to obtain
\begin{equation} \label{Irhou}
\begin{aligned}
I_{\rho u}  = &  \int_\Omega \big( S_1(\rho) - S_1(\bar \rho) - \big< \frac{\delta S_1}{\delta \rho}(\bar \rho), \rho - \bar \rho\big> \big) : \nabla \bar u \ dx \\
& - \int_\Omega \big(N*(\rho - n - \bar \rho + \bar n) \big) \nabla \cdot \big( (\rho - \bar \rho) \bar u \big)dx \\
 = & \int_\Omega S_1(\rho | \bar \rho) : \nabla \bar u \ dx + \int_\Omega (\rho - \bar \rho) \bar u \cdot \nabla (\phi - \bar \phi)dx.
\end{aligned}
\end{equation}
In a similar fashion, using formulas (\ref{weaknpstress}), (\ref{2ndformulan}), (\ref{crossfunctderiv}) one derives 
\begin{equation} \label{Inv}
I_{nv}  = \int_\Omega S_2(n | \bar n) : \nabla \bar v \ dx - \int_\Omega (n - \bar n) \bar v \cdot \nabla (\phi - \bar \phi)dx.
\end{equation}
Substituting (\ref{Irhou}) and (\ref{Inv}) into (\ref{formalRE1}) it yields
\begin{equation} \label{formalRE2}
 \begin{aligned}
  \frac{d}{dt} \mathcal{E}(\rho,n | \bar \rho, \bar n)  = & \int_\Omega S_1(\rho | \bar \rho) : \nabla \bar u + S_2(n | \bar n) : \nabla \bar v\ dx + \int_\Omega \big( (\rho - \bar \rho) \bar u - (n - \bar n) \bar v \big)\cdot \nabla (\phi - \bar \phi)dx \\
  & - \big< \frac{\delta \mathcal{E}}{\delta \rho}(\rho,n) - \frac{\delta \mathcal{E}}{\delta \rho}(\bar \rho, \bar n),  \nabla \cdot \big(\rho(u-\bar u) \big) \big> - \big< \frac{\delta \mathcal{E}}{\delta n}(\rho,n) - \frac{\delta \mathcal{E}}{\delta n}(\bar \rho, \bar n),  \nabla \cdot \big(n(v-\bar v) \big) \big>.
 \end{aligned}
 \end{equation} \par 
 
To reach identity (\ref{formalRE2}) one has only used the continuity equations and the structure of the potential energy functional $\mathcal{E}.$ In order to exploit the structure of the momentum equations, we consider the kinetic energy functional $\mathcal{K}$ for the bipolar fluid system, 
 $$\mathcal{K}(\rho, \rho u, n, nv) = \int_\Omega \tfrac{1}{2}\rho |u|^2 + \tfrac{1}{2}n |v|^2dx,
 $$
 and compute the relative kinetic energy 
 \begin{equation*}
 \begin{split}
  \mathcal{K}(\rho, \rho u, n, nv | \bar{\rho}, \bar{\rho} \bar{u}, \bar{n}, \bar{n} \bar{v}) = & \ \mathcal{K}(\rho, \rho u, n, nv) - \mathcal{K}(\bar{\rho}, \bar{\rho} \bar{u}, \bar{n}, \bar{n} \bar{v}) \\ 
  & - \big< \frac{\delta \mathcal{K}}{\delta \rho}(\bar{\rho}, \bar{\rho} \bar{u}, \bar{n}, \bar{n} \bar{v}) , \rho - \bar{\rho}\big> - \big< \frac{\delta \mathcal{K}}{\delta (\rho u)}(\bar{\rho}, \bar{\rho} \bar{u}, \bar{n}, \bar{n} \bar{v}) , \rho u - \bar{\rho u}\big> \\
  & - \big< \frac{\delta \mathcal{K}}{\delta n}(\bar{\rho}, \bar{\rho} \bar{u}, \bar{n}, \bar{n} \bar{v}) , n - \bar{n}\big> - \big< \frac{\delta \mathcal{K}}{\delta (n v)}(\bar{\rho}, \bar{\rho} \bar{u}, \bar{n}, \bar{n} \bar{v}) , n v - \bar{n v}\big>.
  \\
  & = \int_\Omega \tfrac{1}{2} \rho |u - \bar{u}|^2 + \tfrac{1}{2} n |v - \bar{v}|^2 dx,
 \end{split}
 \end{equation*}
 where $(\rho, \rho u, n, nv), \ (\bar \rho, \bar \rho \bar u, \bar n, \bar n \bar v)$ are two pairs of densities and momenta. \par
 Let $(\rho, \rho u, n, nv), \ (\bar \rho, \bar \rho \bar u, \bar n, \bar n \bar v),$ with $\phi = N * (\rho - n), \ \bar \phi = N * (\bar \rho - \bar n),$ be two solutions of (\ref{BEP}). 
 Consider the momentum equation satisfied by the difference $u-\bar u,$
 $$\varepsilon (u - \bar u)_t + \varepsilon (u \cdot \nabla)(u- \bar u) + \varepsilon \big( (u - \bar u) \cdot \nabla \big) \bar u = - \nabla \Big( \frac{\delta \mathcal{E}}{\delta \rho}(\rho,n) - \frac{\delta \mathcal{E} }{\delta \rho}(\bar \rho, \bar n) \Big)-(u-\bar u).$$
Taking the inner product with $\rho (u - \bar u)$ and using the continuity equation gives
 $$
  \begin{aligned}
   \big(\varepsilon \tfrac{1}{2} \rho |u - \bar u|^2 \big)_t + \varepsilon \nabla \cdot \big( \tfrac{1}{2} \rho |u - & \bar u|^2 u \big) + \varepsilon  \nabla \bar u : \rho (u - \bar u) \otimes (u - \bar u) =  \\
   & = - \nabla \Big( \frac{\delta \mathcal{E}}{\delta \rho}(\rho,n) - \frac{\delta \mathcal{E} }{\delta \rho}(\bar \rho, \bar n) \Big)\cdot \big(\rho (u - \bar u) \big)-\rho|u-\bar u|^2.
  \end{aligned}
$$
Analogously,
$$
  \begin{aligned}
   \big(\varepsilon \tfrac{1}{2} n |v - \bar v|^2 \big)_t + \varepsilon \nabla \cdot \big( \tfrac{1}{2} n |v - & \bar v|^2 v \big) + \varepsilon  \nabla \bar v : n (v - \bar v) \otimes (v - \bar v) = \\
   & = - \nabla \Big( \frac{\delta \mathcal{E}}{\delta n}(\rho,n) - \frac{\delta \mathcal{E} }{\delta n}(\bar \rho, \bar n) \Big)\cdot \big(n (v - \bar v) \big)-n|v-\bar v|^2.
  \end{aligned}
$$
Adding the  previous expressions and integrating over space renders the evolution of the relative kinetic energy
 
 \begin{equation} \label{evolrelativekinetic}
  \begin{aligned}
   \frac{d}{dt} \int_\Omega & \varepsilon \tfrac{1}{2} \rho |u - \bar{u}|^2 + \varepsilon \tfrac{1}{2} n |v - \bar{v}|^2 dx + \int_\Omega \rho|u-\bar u|^2 + n |v - \bar v|^2dx = \\
   = & - \varepsilon \int_\Omega \nabla \bar{u} : \rho (u-\bar{u}) \otimes (u-\bar{u})+ \nabla \bar{v} : n (v-\bar{v}) \otimes  (v-\bar{v})dx \\
   & + \big< \frac{\delta \mathcal{E}}{\delta \rho}(\rho,n) - \frac{\delta \mathcal{E}}{\delta \rho}(\bar \rho, \bar n),  \nabla \cdot \big(\rho(u-\bar u) \big) \big> + \big< \frac{\delta \mathcal{E}}{\delta n}(\rho,n) - \frac{\delta \mathcal{E}}{\delta n}(\bar \rho, \bar n),  \nabla \cdot \big(n(v-\bar v) \big) \big>.
  \end{aligned}
\end{equation}

\smallskip
The relative energy identity is then obtained by adding (\ref{formalRE2}) with (\ref{evolrelativekinetic}):
\begin{equation} \label{REBEPEvol}
\begin{split}
 \frac{d}{dt}  \Big( & \mathcal{E}(\rho, n | \bar{\rho}, \bar{n}) + \varepsilon \int_\Omega \tfrac{1}{2} \rho |u - \bar u|^2 + \tfrac{1}{2} n |v - \bar v|^2  dx\Big) 
  +  \int_\Omega \rho |u - \bar{u}|^2 + n |v - \bar{v}|^2 dx =
\\
  = &  - \varepsilon \int_\Omega \nabla \bar{u} : \rho (u-\bar{u}) \otimes (u-\bar{u})+ \nabla \bar{v} : n (v-\bar{v}) \otimes  (v-\bar{v})dx \\
  & +  \int_\Omega S_1(\rho | \bar{\rho}) : \nabla \bar{u} + S_2(n | \bar{n}) : \nabla \bar{v} \ dx +  \int_\Omega \big( (\rho -\bar{\rho}) \bar{u} - (n - \bar{n})\bar{v} \big) \cdot \nabla(\phi- \bar{\phi}) dx.
\end{split}
\end{equation} \par

\section{Statement of the main result}
\label{sec4}

The main objective of this work is to compare a dissipative weak solution $(\rho,\rho u,n,nv)$, with $\phi = N \ast (\rho - n)$, of the bipolar Euler-Poisson system \eqref{BEP}
with a strong and bounded away from vacuum solution $(\bar \rho, \bar n)$, $\bar \phi = N \ast (\bar \rho - \bar n)$, of the bipolar drift-diffusion system \eqref{BDD}.
As precised in subsection \ref{sec:bipolar}, the limit $\varepsilon \to 0$  corresponds to the overdamping limit for the bipolar Euler-Poisson system.  
The plan of the section is to first precise
the notions of solutions utilized in this work, then describe the methodology of comparison via the relative energy, and finally state the main result.

The internal energy functions
$h_1,h_2 $ and the pressure functions $p_1,p_2$ are assumed to satisfy, apart from
(\ref{thermoconsistency}), the limiting behaviors
\begin{equation} \label{ho(g)}
\lim\limits_{r \to +\infty} \frac{h_i(r)}{r^{\gamma_i}} = \frac{k_i}{\gamma_i -1}, 
\end{equation}
and 
\begin{equation}\label{pbound}
|p_i^{\prime \prime}(r)| \leq \hat{k}_i \frac{p_i^\prime(r)}{r}, \ r>0,
\end{equation}
for some exponents $\gamma_1, \gamma_2 > 1$ and  
for some positive constants $k_i,\hat{k}_i, \ i=1,2$.
The prototypical examples satisfying these conditions are
$$p(r) = k r^\gamma, \ \ \ h(r)= \frac{k}{\gamma -1}r^\gamma,$$ where $\gamma > 1$ and $k > 0$.

\medskip

First we describe the assumptions on the solution of the bipolar Euler-Poisson system \eqref{BEP}.
\begin{definition} \label{weakformulation}
 The vector function $(\rho,\rho u,n,nv)$ with $\rho, n \ge 0$ and regularity
 $$\rho \in C\big([0,T[;  L^{\gamma_1}(\Omega)\big), \qquad  \ n \in C\big([0,T[; L^{\gamma_2}(\Omega)\big),$$ 
 $$\rho u, \ nv \in C\Big([0,T[;\big(L^1(\Omega)\big)^d\Big),$$
 $$\rho |u|^2, \ n |v|^2 \in  L^1 \big ( ]0, T[ \times \Omega) \big), $$
together with $\phi = N * (\rho -n)$ satisfying
$$
\rho \nabla \phi , \; n \nabla \phi \in L^1 \left ( ]0,T[ \times \Omega \right ) 
$$
is a weak solution of (\ref{BEP}) provided that:
\begin{enumerate}[(i)]
 \item $(\rho,\rho u,n,nv)$ satisfies (\ref{BEP}) in the weak sense
 \begin{equation} \label{weak1}
         -\int_0^T \int_\Omega \varphi_t \rho \ dxdt-\int_0^T \int_\Omega \nabla \varphi \cdot (\rho u) dxdt - \int_\Omega \varphi \rho \big|_{t=0}dx=0,
        \end{equation}
 \begin{equation} \label{weak2}
        \begin{split}
        -\varepsilon \int_0^T \int_\Omega \tilde{\varphi}_t \cdot (\rho u)dxdt - \varepsilon \int_0^T \int_\Omega \nabla \tilde{\varphi} : \rho u \otimes u \ dx dt -  \int_0^T \int_\Omega (\nabla \cdot \tilde{\varphi}) p_1(\rho)  dxdt \\ 
        -\varepsilon \int_\Omega \tilde{\varphi} \cdot (\rho u)\big|_{t=0}dx =  -  \int_0^T \int_\Omega \tilde{\varphi}\cdot (\rho \nabla \phi)dxdt -  \int_0^T \int_\Omega \tilde{\varphi} \cdot (\rho u)dxdt,
        \end{split}
        \end{equation}
\begin{equation} \label{weak3}
         -\int_0^T \int_\Omega \psi_t n \ dxdt-\int_0^T \int_\Omega \nabla \psi \cdot (nv) dxdt - \int_\Omega \psi n \big|_{t=0}dx=0,
        \end{equation}
 \begin{equation} \label{weak4}
        \begin{split}
         -\varepsilon \int_0^T \int_\Omega \tilde{\psi}_t \cdot (nv)dxdt - \varepsilon \int_0^T \int_\Omega \nabla \tilde{\psi} :  nv \otimes v \ dx dt -  \int_0^T \int_\Omega (\nabla \cdot \tilde{\psi}) p_2(n)  dxdt \\ 
         - \varepsilon \int_\Omega \tilde{\psi} \cdot (nv)\big|_{t=0}dx =   \int_0^T \int_\Omega \tilde{\psi} \cdot (n \nabla \phi)dxdt - \int_0^T \int_\Omega \tilde{\psi} \cdot (nv)dxdt,
        \end{split}
        \end{equation}
      for all Lipschitz test functions $\varphi, \psi : [0,T[ \times \bar{\Omega} \to \mathbb{R}, \ \tilde{\varphi}, \tilde{\psi}:[0,T[ \times \bar{\Omega} \to \mathbb{R}^d$ compactly supported in time and satisfying $\tilde{\varphi} \cdot \nu = \tilde{\psi} \cdot \nu = 0 $ on $[0,T[ \times \partial \Omega,$ where $\nu$ is any outer normal vector to the boundary;
    \item $(\rho,\rho u,n,nv)$ is equipped with the bounds:
    \begin{equation} \label{massconservation}
 \int_\Omega \rho \ dx = \int_\Omega n \ dx = M < +\infty, \ \ \ \forall t \in [0,T[,
  \end{equation}
  \begin{equation} \label{energyconservation}
  \underset{[0,T[}{\text{sup}} \int_\Omega  \varepsilon\tfrac{1}{2}\rho|u|^2+\varepsilon \tfrac{1}{2}n|v|^2+h_1(\rho)+h_2(n)+ \tfrac{1}{2} |\nabla \phi|^2  dx < +\infty;
 \end{equation}
\end{enumerate}
\end{definition}

\medskip
\noindent
Solutions of (\ref{BEP}) clearly depend on $\varepsilon$, that is  $(\rho,\rho u,n,nv)=(\rho_\varepsilon,\rho_\varepsilon u_\varepsilon,n_\varepsilon,n_\varepsilon v_\varepsilon)$; this dependence is supressed for simplicity. 
 Property (\ref{massconservation}) represents the conservation of mass for $\rho$ and $n$, whereas (\ref{energyconservation}) asserts that
 the total energy is finite.

\begin{definition}
 A weak solution $(\rho,\rho u,n,nv), \ \phi = N*(\rho -n),$ of (\ref{BEP}) is called dissipative if $\rho |u|^2, \ n |v|^2, \ |\nabla \phi|^2 \in C\big([0,T[; L^1(\Omega) \big),$ and it satisfies 
 \begin{equation} \label{weakdissip}
   \begin{split}
-  \int_0^T  & \int_\Omega \big(\varepsilon \tfrac{1}{2}\rho|u|^2+\varepsilon \tfrac{1}{2}n|v|^2+ h_1(\rho)+h_2(n)+ \tfrac{1}{2} |\nabla \phi|^2\big) \dot{\theta}(t)dxdt \\
    + & \int_0^T \int_\Omega\big( \rho |u|^2+ n |v|^2\big) \theta(t)dxdt \\
    \leq   \int_\Omega & \big(\varepsilon \tfrac{1}{2}\rho|u|^2+\varepsilon \tfrac{1}{2}n|v|^2+h_1(\rho)+h_2(n)+ \tfrac{1}{2} |\nabla \phi|^2\big)\big|_{t=0} \theta(0) dx
   \end{split}
  \end{equation} \\ 
  for any non-negative $\theta \in W^{1,\infty}([0,T[)$ with compact support. 
\end{definition} \par

 Next, we turn to solutions $(\bar{\rho}, \bar{n}),$ with $\bar{\phi}= N \ast (\bar{\rho}- \bar{n}),$ of the bipolar drift-diffusion system.
These are assumed to be classical solutions of \eqref{BDD} which satisfy the boundary conditions (\ref{boundarycondBDD}), 
and emanate from initial data satisfying the bounds 
  \begin{align}
   \int_\Omega \bar \rho_0 \ dx = \int_\Omega \bar n_0 \ dx = \bar M < +\infty,
  \\
  \int_\Omega h_1(\bar{\rho}_0)+h_2(\bar{n}_0)+ \tfrac{1}{2} |\nabla \bar{\phi}_0|^2  dx < +\infty,
   \label{energyconservationDD}
  \end{align}
where $\bar \phi_0 = N*(\bar \rho_0 - \bar n_0).$ Setting 
\begin{equation}\label{defappsol}
 \bar{u} \coloneqq -\nabla\big(h_1^{\prime}(\bar{\rho})+\bar{\phi}\big), \ \ \ \bar{v} \coloneqq -\nabla\big(h_2^{\prime}(\bar{n})-\bar{\phi}\big),
 \end{equation}
after multiplying the previous expressions by $\bar \rho \bar u$ and $\bar n \bar v,$ respectively, and integrating over space one obtains the energy identity for system (\ref{BDD}):
\begin{equation}\label{energyBDD}
 \frac{d}{dt} \int_\Omega h_1(\bar{\rho})+h_2(\bar{n})+ \tfrac{1}{2} |\nabla \bar{\phi}|^2 dx = - \int_\Omega \bar \rho | \bar u |^2 + \bar n |\bar v|^2dx.
\end{equation}
Due to (\ref{energyBDD}), condition (\ref{energyconservationDD}) is preserved for all times $t \in [0,T[.$
Expressing the energy identity (\ref{energyBDD}) in a weak form, one has that a strong solution of (\ref{BDD}) satisfies
\begin{equation} \label{strongdissip}
   \begin{split}
& -   \int_0^T   \int_\Omega \big(h_1(\bar{\rho})+h_2(\bar{n})+ \tfrac{1}{2} |\nabla \bar{\phi}|^2\big) \dot{\theta}(t)dxdt +  \int_0^T \int_\Omega\big( \bar{\rho} |\bar{u}|^2+ \bar{n} |\bar{v}|^2\big) \theta(t)dxdt \\
     & =   \int_\Omega  \big(h_1(\bar{\rho})+h_2(\bar{n})+ \tfrac{1}{2} |\nabla \bar{\phi}|^2\big)\big|_{t=0} \theta(0) dx,
   \end{split}
  \end{equation} 
 for all $\theta \in W^{1,\infty}([0,T[)$ with compact support.
 \par
Moreover, $(\bar{\rho}, \bar{n})$ is assumed to be bounded away from vacuum: \\ 
\textbf{(H)}  There exist $\delta_1, \delta_2 > 0$ and $M_1,M_2 < +\infty$ such that 
$$
\bar{\rho}(t,x) \in [\delta_1, M_1] \, , \quad \bar{n}(t,x) \in [\delta_2, M_2] \quad \mbox{ for $(t,x) \in [0,T[\times \Omega$. }
$$
\par

In order to compare $(\rho, \rho u, n, n v, \phi)$ with $(\bar \rho, \bar n, \bar \phi)$ we proceed along the lines of \cite{lattanziothanos2013} and view 
$(\bar \rho, \bar n, \bar \phi)$ as an approximate solution of \eqref{BEP}. This is accomplished by setting $(\bar u, \bar v)$ via \eqref{defappsol}.
We refer to the resulting $(\bar \rho, \bar \rho u, \bar n , \bar n \bar v)$ as a strong and bounded away from vacuum solution of \eqref{BDD}. The regularity ``strong'' refers to the boundedness of all of its derivatives that will appear later. Precisely, one requires that the derivatives 
$$ 
 \dfrac{\partial  \bar{\rho}}{\partial t},  \
  \dfrac{\partial \bar{n}}{\partial t},  \
 \dfrac{\partial ^2 \bar{\rho}}{\partial x_i\partial t}, \
 \dfrac{\partial ^2 \bar{n}}{\partial x_i \partial t}, \
 \dfrac{\partial ^2 \bar{\phi}}{\partial x_i\partial t},\
 \dfrac{\partial^2 \bar{\rho}}{\partial x_i \partial x_j}, \
 \dfrac{\partial^2 \bar{n}}{\partial x_i \partial x_j}, \
 \dfrac{\partial^2 \bar{\phi}}{\partial x_i \partial x_j}
$$
are in $L^\infty([0,T[ \times \Omega)$ for all $i,j = 1, \ldots, d.$ \par
One easily checks that
\begin{equation*} 
         \bar{\rho}_t + \nabla \cdot(\bar{\rho} \bar{u}) = 0, 
\end{equation*}
\begin{equation*} 
        \bar{n}_t+ \nabla \cdot(\bar{n} \bar{v}) = 0, 
\end{equation*}
and 
\begin{equation} \label{nofluxrhou}
 \bar \rho \bar{u} \cdot \nu = \bar n \bar{v} \cdot \nu = 0 \ \text{on} \ [0,T[ \times \partial \Omega
\end{equation}
for $\nu$ an outer normal vector to $\partial \Omega.$ 
Then setting 
$$\bar{e}_1 \coloneqq (\bar{\rho} \bar{u})_t + \nabla \cdot (\bar{\rho} \bar{u} \otimes \bar{u}),$$  $$\bar{e}_2 \coloneqq (\bar{n} \bar{v})_t + \nabla \cdot (\bar{n} \bar{v} \otimes \bar{v}),$$ the equilibrium system (\ref{BDD}) can be rewritten as an approximation of the system \eqref{BEP}, 
\begin{equation} \label{BDDlifted}
\begin{dcases} 
 \bar{\rho}_t + \nabla \cdot (\bar{\rho} \bar{u}) = 0 \\ 
 (\bar{\rho} \bar{u})_t + \nabla \cdot (\bar{\rho} \bar{u} \otimes \bar{u}) = -\frac{1}{\varepsilon} \bar{\rho} \nabla(h_1^{\prime}(\bar{\rho})+\bar{\phi})-\frac{1}{\varepsilon} \bar{\rho} \bar{u} + \bar{e}_1 \\ 
 \bar{n}_t + \nabla \cdot (\bar{n} \bar{v}) = 0  \\ 
 (\bar{n} \bar{v})_t + \nabla \cdot (\bar{n} \bar{v} \otimes \bar{v}) = -\frac{1}{\varepsilon} \bar{n} \nabla(h_2^{\prime}(\bar{n})-\bar{\phi})-\frac{1}{\varepsilon} \bar{n} \bar{v} + \bar{e}_2\\ 
 -\Delta \bar{\phi} = \bar{\rho} - \bar{n}.
 \end{dcases}
\end{equation} \par
where
$$
\bar{e}_1, \bar{e}_2 \in \big(L^\infty(]0,T[ \times \Omega)\big)^d.
$$ 

The two solutions $(\rho, \rho u, n, n v)$ and $(\bar \rho, \bar \rho u, \bar n , \bar n \bar v),$ with $\phi=N*(\rho -n), \ \bar \phi = N*(\bar \rho - \bar n)$ are then compared by means of  the relative energy  
$\Psi : [0,T[ \to \mathbb{R}$ for \eqref{BEP} given by
$$
\Psi(t)=\int_\Omega \varepsilon \tfrac{1}{2} \rho |u - \bar{u}|^2+\varepsilon \tfrac{1}{2} n |v - \bar{v}|^2 + h_1(\rho | \bar{\rho}) + h_2(n | \bar{n}) +\tfrac{1}{2} |\nabla (\phi - \bar{\phi})|^2dx. 
$$

We prove:
\begin{theorem}\label{mainresult}
Let $(\rho,\rho u,n,n v)$, with $\phi = N*(\rho - n)$,  be a dissipative weak solution of (\ref{BEP}) with $\gamma_1,\gamma_2 \geq 2 - \frac{1}{d},$ and
let $(\bar{\rho}, \bar \rho \bar u, \bar{n}, \bar n \bar v)$,  with $\bar{\phi}= N*(\bar{\rho}-\bar{n})$,  be a strong and bounded away from vacuum solution of \eqref{BDD}.
There exists $C > 0$ such that for $t \in [0, T [$ the relative energy $\Psi$ between these two solutions satisfies the stability estimate
$$ 
\Psi(t) \leq e^{CT}\big(\Psi(0) + \varepsilon^2 \big). 
$$
Therefore if $\Psi(0) \to 0$ as $\varepsilon \to 0$, then $\Psi(t) \to 0$ as $\varepsilon \to 0$ for every $t \in [0, T [$.
\end{theorem}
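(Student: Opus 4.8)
The plan is to combine the exact relative energy identity \eqref{REBEPEvol}, valid for two smooth solutions of \eqref{BEP}, with the fact that the drift-diffusion solution $(\bar\rho,\bar n)$ is only an \emph{approximate} solution of \eqref{BEP} in the sense of \eqref{BDDlifted}. First I would carry out carefully the argument sketched in Proposition~\ref{relativeentropy}: namely, rederive the relative energy inequality in the reduced-regularity class, where $(\rho,\rho u,n,nv)$ is merely a dissipative weak solution satisfying \eqref{weakdissip} while $(\bar\rho,\bar n)$ is strong. Because $(\bar\rho,\bar n)$ is not an exact solution, the evolution of $\Psi$ acquires, relative to \eqref{REBEPEvol}, two extra contributions coming from the error terms $\bar e_1,\bar e_2$ in the momentum equations of \eqref{BDDlifted}. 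Concretely I expect an identity of the schematic form
\begin{equation*}
\frac{d}{dt}\Psi(t) + \int_\Omega \rho|u-\bar u|^2 + n|v-\bar v|^2 \,dx = \mathcal{R}_{\mathrm{kin}} + \mathcal{R}_{\mathrm{pres}} + \mathcal{R}_{\mathrm{elec}} + \mathcal{R}_{\mathrm{err}},
\end{equation*}
where $\mathcal{R}_{\mathrm{kin}}$ is the $\varepsilon$-weighted convective term, $\mathcal{R}_{\mathrm{pres}}$ the relative-pressure-stress term, $\mathcal{R}_{\mathrm{elec}}$ the electric field term $\int_\Omega((\rho-\bar\rho)\bar u-(n-\bar n)\bar v)\cdot\nabla(\phi-\bar\phi)\,dx$, and $\mathcal{R}_{\mathrm{err}}$ the terms involving $\bar e_1,\bar e_2$ paired against $\rho(u-\bar u)$, $n(v-\bar v)$.

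Next I would bound each remainder by $C\Psi(t)$ plus controllable lower-order pieces. The convective term $\mathcal{R}_{\mathrm{kin}}$ is handled by the $L^\infty$ bounds on $\nabla\bar u,\nabla\bar v$, giving $\varepsilon\, C\int\rho|u-\bar u|^2+n|v-\bar v|^2\,dx$, which for small $\varepsilon$ is absorbed into the dissipation on the left-hand side. The relative pressure term $\mathcal{R}_{\mathrm{pres}}$ is controlled by $C\int h_1(\rho|\bar\rho)+h_2(n|\bar n)\,dx \le C\Psi(t)$, using hypothesis \textbf{(H)} together with the structural assumptions \eqref{ho(g)}--\eqref{pbound} to compare $S_i(\cdot|\cdot)$ with the relative internal energies $h_i(\cdot|\bar\cdot)$, both in the region near $\bar\rho$ (where a Taylor remainder argument applies) and in the far region $\rho\gg 1$ (where the growth condition \eqref{ho(g)} and the constraint $\gamma_i\ge 2-\tfrac1d$ are what make the argument close). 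The error term $\mathcal{R}_{\mathrm{err}}$, by Cauchy--Schwarz and $\bar e_i\in L^\infty$, is bounded by $\tfrac14\int\rho|u-\bar u|^2+n|v-\bar v|^2\,dx + C\varepsilon^2\int\bar\rho|\bar e_1|^2/\!\cdots$; the key point is that the $1/\varepsilon$ weighting in \eqref{BDDlifted} produces exactly a factor $\varepsilon^2$ after the inner product with $\varepsilon$-scaled quantities, which is the source of the $\varepsilon^2$ appearing in the final estimate.

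The main obstacle, as the authors flag in the introduction, is the electric field term $\mathcal{R}_{\mathrm{elec}}$. Because the charges enter the Poisson equation antisymmetrically ($-\Delta\phi=\rho-n$) and the two fluids carry \emph{distinct} velocities $\bar u\ne\bar v$, one cannot cancel this term against $\tfrac12\frac{d}{dt}\int|\nabla(\phi-\bar\phi)|^2$ as in the single-species case \cite{gasdynamics}. The strategy here is to rewrite $\nabla(\phi-\bar\phi)=\nabla N*\big((\rho-\bar\rho)-(n-\bar n)\big)$ and estimate $\int_\Omega(\rho-\bar\rho)\bar u\cdot\nabla(\phi-\bar\phi)\,dx$ (and its $n$-counterpart) using boundedness of $\bar u,\bar v$ and mapping properties of the Riesz/Neumann potential \cite{stein,carloskenig}: the gradient of the Neumann function has a Riesz-potential kernel of order $d-1$, so $\nabla(\phi-\bar\phi)$ gains integrability over $\rho-\bar\rho,n-\bar n$. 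One then interpolates these differences, splitting into a bounded region controlled by $\int|\nabla(\phi-\bar\phi)|^2\le 2\Psi(t)$ and an unbounded region controlled by $\int h_i(\cdot|\bar\cdot)$ via \eqref{ho(g)}; the exponent threshold $\gamma_i\ge 2-\tfrac1d$ is precisely what guarantees the relevant Lebesgue exponents are admissible for the Riesz-potential estimate. Once every remainder is bounded by $C\big(\Psi(t)+\varepsilon^2\big)$ with the dissipation absorbing the quadratic velocity differences, I would integrate in time, use that $\Psi$ at $t=0$ controls the initial data, and close with Gronwall's inequality to obtain $\Psi(t)\le e^{CT}\big(\Psi(0)+\varepsilon^2\big)$, whence the stated convergence as $\varepsilon\to0$.
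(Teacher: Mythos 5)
Your proposal follows the same route as the paper: the weak--strong relative energy inequality of Proposition \ref{relativeentropy} with the four remainders $\mathcal{J}_1$--$\mathcal{J}_4$, the Riesz-potential/Neumann-function estimate combined with interpolation and a bounded/unbounded domain splitting for the electric field term (which is indeed where $\gamma_i\ge 2-\tfrac1d$ enters), the $O(\varepsilon^2)$ bound on the $\bar e_1,\bar e_2$ terms, and Gronwall. The only slips are cosmetic: the relative pressure term needs neither a two-region argument nor the exponent restriction, since \eqref{pbound} and \eqref{thermoconsistency} give $p_i(r|\bar r)\le \hat k_i\, h_i(r|\bar r)$ directly from the integral form of the Taylor remainder, and the paper bounds the $\varepsilon$-weighted convective term by $C\int_0^t\Psi\,d\tau$ (it is already part of $\Psi$) rather than absorbing it into the dissipation, which avoids any smallness requirement on $\varepsilon$.
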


\section{Convergence in the relaxation limit}
\label{sec5}
This section contains the proof of Theorem \ref{mainresult}. We start with some auxilliary results on the behavior of the Neumann function and Riesz potentials,
then continue with the derivation of the relative energy identity within the regularity class detailed in section \ref{sec4}, and conclude with the proof 
of the stability estimate.

\subsection{Auxiliary results} \label{sec:Neumann}
Regarding the Neumann function $N \in C^\infty ( \bar{\Omega} \times \bar{\Omega} \setminus \{(x,x) \ | \ x \in \bar{\Omega} \} )$, the relevant properties that will be used are \cite[Chapter 1, Section 6]{carloskenig}:
\begin{enumerate}[(i)]
 \item $N(x,y) = N(y,x),$ 
 \item $N(x,y) \leq \dfrac{C}{|x-y|^{d-2}},$ 
 \item $\nabla_x N(x,y) \leq \dfrac{C}{|x-y|^{d-1}},$ 
 \item If $f \in H^1(\Omega)^* \cap W^{1,p}(\Omega)^*,$ for $p < d/(d-1),$ satisfies $\int_\Omega f \ dx = 0,$ then $\beta = N * f$ is the unique solution of 
 $$\int_\Omega \nabla \beta \cdot \nabla \varphi \ dx = \int_\Omega f \varphi \ dx \ \ \forall \varphi \in H^1(\Omega)$$ that satisfies $\int_{\partial \Omega} \beta \ dx = 0$ and belongs to $C^\alpha(\bar{\Omega}),$ with $\alpha$ depending only on $d.$
\end{enumerate}  
\vspace{3mm}
\par
In order to deal with the electrostatic potential $\phi$ one needs to recall the notion of Riesz potential. Given a function $f : \mathbb{R}^d \to \mathbb{R}$, the Riesz potential of $f$ is the function $I_\alpha(f)$ given by 
$$I_\alpha(f)(x) = \int_{\mathbb{R}^d} \dfrac{f(y)}{|x-y|^{d-\alpha}} dy,  $$
with $0 < \alpha < d.$
Regarding these potentials one has the following result \cite[Chapter V, Section 1]{stein}:

\begin{proposition} \label{steinprop}
 Let $0 < \alpha < d$ and $1 < p < d/\alpha.$ If $f \in L^p(\mathbb{R}^d),$ then $I_\alpha(f)(x)$ converges absolutely for a.e. $x \in \mathbb{R}^d$ and 
 $$||I_\alpha(|f|)||_{L^{\frac{dp}{d-\alpha p}}(\mathbb{R}^d)} \leq C || f ||_{L^p(\mathbb{R}^d)},$$
 for some positive constant $C = C(\alpha, d,p).$
\end{proposition}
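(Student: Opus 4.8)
The plan is to establish this classical Hardy--Littlewood--Sobolev estimate by reducing it to a pointwise bound of $I_\alpha(|f|)$ in terms of the Hardy--Littlewood maximal function, and then invoking the maximal function theorem. Write $q \coloneqq \frac{dp}{d-\alpha p}$, so that $\frac{1}{q} = \frac{1}{p} - \frac{\alpha}{d}$, and let $Mf(x) \coloneqq \sup_{r>0} \frac{1}{|B_r(x)|}\int_{B_r(x)} |f(y)|\,dy$ be the maximal function, where $B_r(x)$ is the ball of radius $r$ centered at $x$. The key claim is the pointwise inequality
$$ I_\alpha(|f|)(x) \leq C \, (Mf(x))^{1 - \frac{\alpha p}{d}} \, \|f\|_{L^p(\mathbb{R}^d)}^{\frac{\alpha p}{d}} \quad \text{for a.e. } x \in \mathbb{R}^d. $$
Once this is in hand, the norm estimate follows quickly, and the asserted a.e.\ absolute convergence of $I_\alpha(f)$ is immediate on the set where $Mf$ is finite, which is almost all of $\mathbb{R}^d$ since $f \in L^p$.

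To prove the pointwise claim I would split the defining integral at an arbitrary radius $\delta>0$,
$$ I_\alpha(|f|)(x) = \int_{|x-y| \leq \delta} \frac{|f(y)|}{|x-y|^{d-\alpha}}\,dy + \int_{|x-y| > \delta} \frac{|f(y)|}{|x-y|^{d-\alpha}}\,dy =: A(\delta) + B(\delta). $$
For the near part $A(\delta)$, I decompose the ball $\{|x-y|\le\delta\}$ into dyadic annuli $\{2^{-j-1}\delta < |x-y| \le 2^{-j}\delta\}$, bound the kernel by its largest value $(2^{-j-1}\delta)^{-(d-\alpha)}$ on each annulus, control $\int_{|x-y|\le 2^{-j}\delta}|f|\,dy$ by $C(2^{-j}\delta)^d Mf(x)$, and sum the resulting geometric series in $j$ (convergent because $\alpha>0$) to obtain $A(\delta) \le C\,\delta^\alpha Mf(x)$. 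For the far part $B(\delta)$, Hölder's inequality with conjugate exponent $p'=p/(p-1)$ gives
$$ B(\delta) \leq \|f\|_{L^p} \Big( \int_{|x-y|>\delta} |x-y|^{-(d-\alpha)p'}\,dy \Big)^{1/p'}, $$
and here the hypothesis $p < d/\alpha$ is precisely what guarantees $(d-\alpha)p' > d$, so that the tail integral converges; evaluating it yields $B(\delta) \le C\,\|f\|_{L^p}\,\delta^{\alpha - d/p}$.

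Combining the two bounds gives $I_\alpha(|f|)(x) \le C\delta^\alpha Mf(x) + C\|f\|_{L^p}\delta^{\alpha - d/p}$, and I optimize in $\delta$ by balancing the two terms, i.e.\ choosing $\delta \sim (\|f\|_{L^p}/Mf(x))^{p/d}$, which produces the pointwise claim. Raising it to the power $q$, integrating over $\mathbb{R}^d$, and using the algebraic identity $q\big(1 - \tfrac{\alpha p}{d}\big) = p$ yields
$$ \|I_\alpha(|f|)\|_{L^q}^q \leq C\,\|f\|_{L^p}^{\frac{q\alpha p}{d}} \int_{\mathbb{R}^d} (Mf(x))^p \,dx = C\,\|f\|_{L^p}^{\frac{q\alpha p}{d}}\,\|Mf\|_{L^p}^p. $$
Applying the Hardy--Littlewood maximal theorem $\|Mf\|_{L^p} \le C\|f\|_{L^p}$ and checking that $\tfrac{q\alpha p}{d} + p = q$ then collapses the exponents to give $\|I_\alpha(|f|)\|_{L^q} \le C\|f\|_{L^p}$.

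The main obstacle, and the only genuinely nonelementary ingredient, is the $L^p$-boundedness of the maximal operator, which is exactly where the restriction $p>1$ enters: the strong $(p,p)$ estimate fails at $p=1$, consistent with the endpoint case being only of weak type. Everything else (the dyadic estimate for $A(\delta)$, the Hölder bound for $B(\delta)$ whose convergence encodes the constraint $p<d/\alpha$, and the scaling optimization in $\delta$) is elementary. Should a fully self-contained argument be preferred, one could instead note that the kernel $|x|^{-(d-\alpha)}$ lies in weak $L^{d/(d-\alpha)}(\mathbb{R}^d)$, derive the weak-type $(p,q)$ bound directly by the same splitting, and conclude via the Marcinkiewicz interpolation theorem.
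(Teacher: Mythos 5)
Your proposal is correct, and it is worth noting that the paper itself gives no proof of Proposition \ref{steinprop}: it simply quotes the result from Stein [Chapter V, Section 1], so the only comparison available is with Stein's proof in that reference. Stein's argument is the one you sketch in your final sentence as an alternative: split the kernel $|x|^{\alpha-d}$ into a compactly supported near part (which is integrable) and a far part (which lies in $L^{p'}$, equivalently in weak $L^{d/(d-\alpha)}$), derive the weak-type $(p,q)$ bound directly from the distribution function by optimizing the splitting radius, and then upgrade to the strong bound via Marcinkiewicz interpolation, using that the admissible range $1<p<d/\alpha$ is open. Your main route instead runs through Hedberg's pointwise inequality $I_\alpha(|f|)(x) \le C\,(Mf(x))^{1-\alpha p/d}\,\|f\|_{L^p}^{\alpha p/d}$ and the Hardy--Littlewood maximal theorem; this trades interpolation for the maximal function, gives the strong estimate in a single step, and the same splitting immediately yields the a.e.\ absolute convergence on $\{Mf<\infty\}$, a set of full measure for $f\in L^p$. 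The trade-off is that Stein's method also produces the weak-type $(1,d/(d-\alpha))$ endpoint, which your argument cannot reach since $M$ is unbounded on $L^1$ (as you correctly observe, this is exactly where $p>1$ enters). Your exponent bookkeeping is all sound: $(d-\alpha)p'>d$ is equivalent to $p<d/\alpha$, the choice $\delta \sim (\|f\|_{L^p}/Mf(x))^{p/d}$ balances $C\delta^\alpha Mf(x)$ against $C\|f\|_{L^p}\delta^{\alpha-d/p}$, and the identities $q(1-\tfrac{\alpha p}{d})=p$ and $\tfrac{q\alpha p}{d}+p=q$ follow from $\tfrac{1}{q}=\tfrac{1}{p}-\tfrac{\alpha}{d}$; the only pedantic point you could add is that the optimization in $\delta$ presumes $0<Mf(x)<\infty$, the excluded cases being trivial ($Mf\equiv 0$ forces $f=0$ a.e., and $Mf(x)=\infty$ only on a null set).
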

Combining this previous proposition with the properties of the Neumann function one has: 
\begin{proposition} \label{neumannprop}
 Let $d \in \mathbb{N} \setminus \{1,2\},$ $f,g \in  L^\gamma(\Omega), \ \phi = N * f, \ \varphi = N * g, \ \nabla \phi = \nabla_xN * f,$ and $\nabla \varphi = \nabla_x N * g,$ where $\gamma \geq \frac{2d}{d+2},$ $\Omega \subseteq \mathbb{R}^d$ is a smooth bounded domain with smooth boundary, and $N$ is the Neumann function. Then, $\phi, \varphi \in L^{\frac{2d}{d-2}}(\Omega),$ $\nabla \phi, \nabla \varphi \in L^2(\Omega),$ and 
 \begin{equation} \label{intparts} 
 \int_\Omega \nabla \phi \cdot \nabla \varphi dx = \int_\Omega f \varphi dx = \int_\Omega g \phi dx = \int_\Omega \int_\Omega f(x)N(x,y)g(y) dxdy.
\end{equation}
\end{proposition}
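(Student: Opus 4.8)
The plan is to recognize $\phi$ and $\varphi$ as Riesz potentials of $f$ and $g$, read off their integrability from Proposition~\ref{steinprop}, and then obtain \eqref{intparts} from the symmetry of $N$ together with the weak formulation of the Neumann problem recorded in property~(iv). I would first bound the potentials. Extending $f$ by zero outside $\Omega$ and using the kernel estimate $N(x,y) \le C|x-y|^{-(d-2)}$ from property~(ii) gives the pointwise domination $|\phi(x)| \le C\,I_2(|f|)(x)$. Since $\Omega$ is bounded, $f \in L^\gamma(\Omega) \subseteq L^{\frac{2d}{d+2}}(\Omega)$ for every $\gamma \ge \frac{2d}{d+2}$, so it suffices to work at the borderline exponent $p = \frac{2d}{d+2}$. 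Proposition~\ref{steinprop} with $\alpha = 2$ then yields $\phi \in L^{\frac{2d}{d-2}}(\Omega)$, the target exponent being exactly $\frac{dp}{d-2p} = \frac{2d}{d-2}$ and the admissibility $1 < p < d/2$ holding precisely because $d \ge 3$. Likewise, property~(iii) gives $|\nabla\phi(x)| \le C\,I_1(|f|)(x)$, and Proposition~\ref{steinprop} with $\alpha = 1$ and the same $p$ produces $\nabla\phi \in L^2(\Omega)$ since $\frac{dp}{d-p} = 2$. Running the same argument for $g$ gives the analogous bounds for $\varphi$ and $\nabla\varphi$. In particular $\phi,\varphi \in L^{\frac{2d}{d-2}}(\Omega) \subseteq L^2(\Omega)$ and $\nabla\phi,\nabla\varphi \in L^2(\Omega)$, so both potentials lie in $H^1(\Omega)$.

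For \eqref{intparts} I would first establish the chain $\int_\Omega f\varphi\,dx = \int_\Omega g\phi\,dx = \int_\Omega\int_\Omega f(x)N(x,y)g(y)\,dx\,dy$ by Fubini's theorem and the symmetry $N(x,y)=N(y,x)$ of property~(i); Fubini is legitimate because the double integral is absolutely convergent, which follows from H\"older's inequality with the conjugate exponents $\frac{2d}{d+2}$ and $\frac{2d}{d-2}$ --- this is the single place where the hypothesis $\gamma \ge \frac{2d}{d+2}$ is essential. The remaining equality $\int_\Omega \nabla\phi\cdot\nabla\varphi\,dx = \int_\Omega f\varphi\,dx$ is the weak Neumann formulation of $-\Delta\phi = f$: having shown $\varphi \in H^1(\Omega)$, one uses it as an admissible test function in property~(iv) applied to $\phi = N*f$, and symmetrically tests $\varphi = N*g$ against $\phi$. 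Concatenating these identities yields \eqref{intparts}.

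The step demanding the most care is invoking property~(iv), namely confirming that $N*f$ is genuinely the weak solution of the Neumann problem and not merely a convolution. This requires that $f$ be mean-zero --- the compatibility condition of the Neumann problem, which is automatic in the intended application where $f,g$ are differences of equal-mass densities --- and that $f$ define an element of $H^1(\Omega)^*$, which holds because $L^{\frac{2d}{d+2}}(\Omega) \hookrightarrow H^{-1}(\Omega)$ by duality of the Sobolev embedding $H^1(\Omega) \hookrightarrow L^{\frac{2d}{d-2}}(\Omega)$. Once $\phi$ is identified with the $H^1$ weak solution, the integration-by-parts identity against the $H^1$ function $\varphi$ is immediate, and the whole proposition follows.
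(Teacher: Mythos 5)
Your argument is correct, and for two of its three components it coincides with the paper's proof: the bounds $\phi,\varphi\in L^{\frac{2d}{d-2}}(\Omega)$, $\nabla\phi,\nabla\varphi\in L^{2}(\Omega)$ are obtained exactly as in the paper (zero extension, domination of $N*f$ and $\nabla_xN*f$ by $I_2(|\tilde f|)$ and $I_1(|\tilde f|)$, Proposition \ref{steinprop} at the borderline exponent $p=\tfrac{2d}{d+2}$), and the chain $\int_\Omega f\varphi\,dx=\int_\Omega g\phi\,dx=\iint f(x)N(x,y)g(y)\,dxdy$ is likewise H\"older plus Fubini plus the symmetry $N(x,y)=N(y,x)$. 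The one genuine divergence is the first equality $\int_\Omega\nabla\phi\cdot\nabla\varphi\,dx=\int_\Omega f\varphi\,dx$. You identify $\phi=N*f$ with the $H^1$ weak solution of the Neumann problem via property (iv) and then test directly with $\varphi\in H^1(\Omega)$; the paper instead approximates $f,g$ in $L^p(\Omega)$ by $C_c^\infty(\Omega)$ functions, verifies the identity for the smooth approximants, and passes to the limit using the continuity of $f\mapsto\phi$ into $L^{p'}$ and of $f\mapsto\nabla\phi$ into $L^2$ (which follows from the same Riesz-potential bounds). Your route is shorter and makes the mechanism transparent, but it leans on the hypotheses of property (iv) --- in particular the compatibility condition $\int_\Omega f\,dx=0$, which the proposition as stated does not assume. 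You are right to flag this and to note that it holds in the only instances where the proposition is invoked ($f=\rho-n$, $g=\bar\rho-\bar n$, differences of equal-mass densities); the paper's density argument does not escape this issue either, since the identity for the smooth approximants ultimately rests on the same characterization of $N*f_n$. The paper's approach buys a self-contained limit passage that reuses the already-established operator bounds; yours buys brevity at the cost of having to verify that $f$ is an admissible datum for property (iv). Either is acceptable.
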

\begin{proof}
First one demonstrates that $\phi \in L^{\frac{2d}{d-2}}(\Omega)$ and $\nabla \phi \in L^2(\Omega)$ (so $\varphi \in L^{\frac{2d}{d-2}}(\Omega)$ and $\nabla \varphi \in L^2(\Omega)$ aswell).
 Set $p = \frac{2d}{d+2}$ and observe that since $d > 2$ one has $1< p < d/2$.  Let $\tilde{f}$ be given by 
 $$\tilde{f}(x) = 
   \begin{cases}
    f(x), \ \text{if} \ x \in \Omega, \\
    0, \ \text{otherwise.}
   \end{cases}$$
Clearly $\tilde{f} \in L^1(\mathbb{R}^d) \cap L^\gamma(\mathbb{R}^d),$ and since $\gamma \geq p = \frac{2d}{d+2}$ interpolation gives that $\tilde{f} \in L^p(\mathbb{R}^d).$  
From the properties of the Neumann function one deduces that 

\begin{equation*}
 |\phi(x)|  \leq \int_\Omega |N(x,y)||f(y)|dy 
             \leq C \int_\Omega \frac{|f(y)|}{|x-y|^{d-2}}dy 
              \leq C \int_{\mathbb{R}^d} \frac{|\tilde{f}(y)|}{|x-y|^{d-2}}dy 
              = CI_2(|\tilde{f}|)(x), \ x \in \Omega.
\end{equation*}
\\
Using Proposition \ref{steinprop} with $\alpha = 2$, $p = \frac{2d}{d+2}$,  one obtains

\begin{equation*}
 ||\phi||_{L^{\frac{2d}{d-2}}(\Omega)}  \leq C||I_2(|\tilde{f}|)||_{L^{\frac{2d}{d-2}}(\Omega)} \leq C||I_2(|\tilde{f}|)||_{L^{\frac{2d}{d-2}}(\mathbb{R}^d)} \leq C || \tilde{f} ||_{L^p(\mathbb{R}^d)} 
              = C || f ||_{L^p(\Omega)}.
 \end{equation*}
                                                      
Similarly, $$|\nabla \phi(x)| \leq C I_1(|\tilde{f}|)(x), \ x \in \Omega, $$
 hence 
 $$
 ||\nabla \phi||_{L^2(\Omega)} \leq  C \| I_1 ( |\tilde{f}| ) \|_{L^2(\Omega)} \le  C ||f||_{L^p(\Omega)}, 
 $$
 where we used Proposition \ref{steinprop} with $\alpha = 1$, $p = \frac{2d}{d+2}$.  \par 
 To prove the second and third equalities of expression (\ref{intparts}) one observes that $p^\prime = \frac{2d}{d-2},$ so 
 $$\int_\Omega  f \varphi dx \leq \Big( \int_\Omega |f|^{p}dx \Big)^{\frac{1}{p}} \Big( \int_\Omega |\varphi|^{p^\prime}dx \Big)^{\frac{1}{p^\prime}} < + \infty, $$ \\
 then Fubini's theorem and the symmetry of the Neumann function yield the desired conclusion. 
  
 Next, we prove the first equality in (\ref{intparts}). For $f,g \in L^p(\Omega),$ there exist sequences $(f_n)_{n \in \mathbb{N}},(g_n)_{n \in \mathbb{N}}$ belonging to $ C_c^{\infty}(\Omega)$ such that $$f_n \to f \ \text{in} \ L^p(\Omega), $$ $$g_n \to g \ \text{in} \ L^p(\Omega). $$ Let $\phi_n = N * f_n$ and $\varphi_n = N * g_n $ Then,  
 $$||\phi - \phi_n ||_{L^{p^\prime}(\Omega)} \leq C ||f - f_n ||_{L^p(\Omega)} \to 0 \ \text{as} \ n \to +\infty, $$
 and
 $$||\nabla \phi - \nabla \phi_n ||_{L^2(\Omega)} \leq C ||f - f_n ||_{L^p(\Omega)} \to 0 \ \text{as} \ n \to +\infty. $$
 In other words, $\phi_n \to \phi \ \text{in} \ L^{p^\prime}(\Omega)$ and $\nabla \phi_n \to \nabla \phi \ \text{in} \ L^2(\Omega),$ and the same holds for $\varphi_n, \varphi, \nabla \varphi_n, \nabla \varphi.$ \\
 Thus,
 \begin{equation*}
  \begin{split}
   \Big|\int_\Omega  f_n \varphi_n dx - \int_\Omega f \varphi dx\Big|  & \leq \int_\Omega |f_n| |\varphi_n - \varphi|dx + \int_\Omega  |f_n - f||\varphi| dx \\
   & \leq ||f_n ||_{L^p(\Omega)} ||\varphi_n - \varphi ||_{L^{p^{\prime}}(\Omega)} +  ||f_n - f||_{L^p(\Omega)}|| \varphi ||_{L^{p^{\prime}}(\Omega)} \\
   & \to 0 \ \text{as} \ n \to +\infty,
  \end{split}
\end{equation*}
and 
\begin{equation*}
\begin{split}
  \Big| \int_\Omega \nabla \phi_n \cdot \nabla \varphi_n dx - \int_\Omega \nabla \phi \cdot \nabla \varphi dx \Big| & \leq ||\nabla \phi_n - \nabla \phi ||_{L^2(\Omega)}||\nabla \varphi_n ||_{L^2(\Omega)}+|| \nabla \phi||_{L^2(\Omega)}||\nabla \varphi_n - \nabla \varphi ||_{L^2(\Omega)} \\
  & \to 0 \ \text{as} \ n \to +\infty.
\end{split}
\end{equation*}
 Observing that $f_n, \phi_n, \varphi_n$ satisfy 
$$ \int_\Omega \nabla \phi_n \cdot \nabla \varphi_ndx = \int_\Omega f_n \varphi_n dx, $$
after letting $n \to +\infty$ one obtains the desired identity. 
\end{proof}

\par
We finish this subsection with a result proved in \cite[Lemma 2.4]{lattanziothanos2013}, which is used in the proof of Lemma \ref{lemmaJ3}.
\begin{lemma} \label{hlemma}
 Let $h \in C^2(]0,+\infty[) \cap C([0,+\infty[)$ be such that $\lim\limits_{r \to +\infty} \frac{h(r)}{r^\gamma}=\frac{k}{\gamma-1}$ for some $k>0$ and $\gamma > 1,$ and $h^{\prime \prime}(r) > 0 \ \forall r>0.$ Assume that $\bar{r} \in [\delta,M],$ where $\delta > 0$ and $M < +\infty.$ Then, there exists $R \geq M+1$ and positive constants $C_1, C_2$ such that 
\begin{equation*} 
 h(r| \bar{r}) \geq
 \begin{cases}
  C_1 |r-\bar{r}|^2, \ \text{if} \ (r,\bar{r}) \in [0,R]\times [\delta,M] \\
  
  C_2 |r-\bar{r}|^{\gamma}, \ \text{if} \ (r,\bar{r}) \in ]R,+\infty [\times [\delta,M].
  
 \end{cases}
\end{equation*}
Furthermore, if $\gamma \geq 2$, then $h(r| \bar{r}) \geq C|r-\bar{r}|^2 $ for every $(r,\bar{r})\in[0,+\infty[\times[\delta,M]$, where $C=\min\{C_1,C_2\}.$ 
\end{lemma}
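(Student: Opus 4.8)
The plan is to split $[0,+\infty[\,\times[\delta,M]$ into a bounded box $[0,R]\times[\delta,M]$ and an unbounded strip $]R,+\infty[\,\times[\delta,M]$, treating the former by a compactness argument and the latter by the prescribed growth of $h$ at infinity. I would first fix $R$ from the large-$r$ analysis, and only afterwards run the compactness argument on the resulting compact box, so that the two constants $C_1,C_2$ are produced in a consistent order.

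For the unbounded strip, recall the exact identity $h(r\,|\,\bar r)=h(r)-h(\bar r)-h'(\bar r)(r-\bar r)$ and study, for $r>\bar r$, the quotient
$$
G(r,\bar r):=\frac{h(r\,|\,\bar r)}{(r-\bar r)^{\gamma}}
=\frac{h(r)}{r^{\gamma}}\Big(\frac{r}{r-\bar r}\Big)^{\gamma}-\frac{h(\bar r)}{(r-\bar r)^{\gamma}}-\frac{h'(\bar r)}{(r-\bar r)^{\gamma-1}}.
$$
Since $\bar r\in[\delta,M]$ keeps $h(\bar r)$ and $h'(\bar r)$ uniformly bounded, the last two terms tend to $0$ uniformly in $\bar r$ as $r\to+\infty$ (this is where $\gamma>1$ enters), while $r/(r-\bar r)\to1$ uniformly and $h(r)/r^{\gamma}\to k/(\gamma-1)$ by hypothesis. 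Hence $G(r,\bar r)\to k/(\gamma-1)>0$ uniformly in $\bar r\in[\delta,M]$, so there is an $R$, which may be taken with $R\ge M+1$, and a constant $C_2:=\tfrac12\,k/(\gamma-1)>0$ with $h(r\,|\,\bar r)\ge C_2|r-\bar r|^{\gamma}$ on $]R,+\infty[\,\times[\delta,M]$.

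For the bounded box I would consider $g(r,\bar r):=h(r\,|\,\bar r)/(r-\bar r)^2$ off the diagonal, extended by $g(\bar r,\bar r):=\tfrac12 h''(\bar r)$, and show $g$ is continuous and strictly positive on the compact set $[0,R]\times[\delta,M]$. Continuity away from the diagonal and away from $r=0$ is immediate; at a point $(0,\bar r)$ the denominator does not vanish (as $\bar r\ge\delta>0$) and $h(0\,|\,\bar r)$ is finite by $h\in C([0,+\infty[)$; across the diagonal, where $\bar r\ge\delta$ forces a whole neighbourhood to stay in $]0,+\infty[$ with $h\in C^2$, I would use the integral form of the remainder,
$$
g(r,\bar r)=\int_0^1(1-t)\,h''\big(\bar r+t(r-\bar r)\big)\,dt,
$$
which is jointly continuous and equals $\tfrac12 h''(\bar r)$ on the diagonal. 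Strict positivity comes from strict convexity: $h''>0$ makes $r\mapsto h(r\,|\,\bar r)$ strictly convex with unique minimum $0$ at $r=\bar r$, so $h(r\,|\,\bar r)>0$ for $r\ne\bar r$ (including $r=0$, since $h(\,\cdot\,|\,\bar r)$ is strictly decreasing on $]0,\bar r[$), while on the diagonal $g=\tfrac12 h''>0$. A continuous positive function on a compact set attains a positive minimum $C_1>0$, giving $h(r\,|\,\bar r)\ge C_1|r-\bar r|^2$ on $[0,R]\times[\delta,M]$.

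For the supplementary claim with $\gamma\ge2$, note that on $]R,+\infty[\,\times[\delta,M]$ one has $|r-\bar r|\ge R-M\ge1$, so $|r-\bar r|^{\gamma}\ge|r-\bar r|^2$ and hence $h(r\,|\,\bar r)\ge C_2|r-\bar r|^2$ there; combined with the bounded box this yields $h(r\,|\,\bar r)\ge C|r-\bar r|^2$ with $C=\min\{C_1,C_2\}$ on all of $[0,+\infty[\,\times[\delta,M]$. I expect the main obstacle to be the uniformity in $\bar r$ of the limit $G\to k/(\gamma-1)$, so that a single $R$ and a single $C_2$ serve the whole interval $[\delta,M]$, together with the continuous extension of $g$ across the diagonal; once these two points are secured, the remaining steps are standard one-variable convexity and compactness facts.
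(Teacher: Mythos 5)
This lemma is not proved in the paper at all: it is quoted verbatim from \cite[Lemma 2.4]{lattanziothanos2013}, so there is no in-text proof to compare against. Your argument is a correct, self-contained proof, and it follows the standard route (and, in substance, the route of the cited reference): a compactness argument on the bounded box $[0,R]\times[\delta,M]$ and the prescribed growth $h(r)/r^{\gamma}\to k/(\gamma-1)$ on the tail. The two delicate points are handled properly. For the tail, the decomposition of $G(r,\bar r)$ works because $h(\bar r)$ and $h'(\bar r)$ are uniformly bounded on the compact set $[\delta,M]\subset\,]0,+\infty[$ where $h$ is $C^{2}$, and $\gamma>1$ kills the term $h'(\bar r)/(r-\bar r)^{\gamma-1}$; this gives a single $R\ge M+1$ and $C_{2}=\tfrac12 k/(\gamma-1)$ valid for all $\bar r\in[\delta,M]$. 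For the box, extending $g(r,\bar r)=h(r\,|\,\bar r)/(r-\bar r)^{2}$ by $\tfrac12 h''(\bar r)$ on the diagonal and using the integral remainder (legitimate since a neighbourhood of the diagonal stays in $[\delta/2,M+\delta/2]\subset\,]0,+\infty[$) gives joint continuity, while strict convexity plus the monotonicity of $r\mapsto h(r\,|\,\bar r)$ on $]0,\bar r[$ gives positivity including at $r=0$, where only continuity of $h$ is available; compactness then yields $C_{1}>0$. The $\gamma\ge2$ addendum via $|r-\bar r|\ge R-M\ge1$ on the tail is also correct. No gaps.
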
 

\subsection{Derivation of the relative energy inequality}
The relative energy inequality is now derived within the regularity class detailed in section \ref{sec4}. 

\begin{proposition} \label{relativeentropy}
Let $(\rho,\rho u,n,nv),$ with $\phi = N*(\rho - n),$ be a dissipative weak solution of (\ref{BEP}) with $\gamma_1,\gamma_2 \geq \frac{2d}{d+2}$, and let $(\bar{\rho},\bar{\rho} \bar{u},\bar{n},\bar{n} \bar{v}),$ with $\bar{\phi}= N*(\bar{\rho}-\bar{n}),$ be a strong and bounded away from vacuum solution of (\ref{BDD}). Then, for each $t \in [0,T[$, the relative energy $\Psi$ between these two solutions satisfies the following relative energy inequality: 
\begin{equation} \label{relativeentropyinequality}
\Psi(t)-\Psi(0) + \int_0^t \int_\Omega \rho |u - \bar{u}|^2 + n |v - \bar{v}|^2 dx d\tau   \leq \mathcal{J}_1(t) + \mathcal{J}_2(t) + \mathcal{J}_3(t) + \mathcal{J}_4(t),
\end{equation}
where 
\begin{equation*}
\begin{split}
\mathcal{J}_1(t) & = -\varepsilon \int_0^t \int_\Omega \nabla \bar{u}: \rho (u - \bar{u}) \otimes (u - \bar{u})+\nabla \bar{v}: n (v - \bar{v}) \otimes (v - \bar{v})dxd \tau, \\
\mathcal{J}_2(t) & = -  \int_0^t \int_\Omega (\nabla \cdot \bar{u}) p_1(\rho | \bar{\rho}) + (\nabla \cdot \bar{v}) p_2(n | \bar{n}) dx d\tau, \\
\mathcal{J}_3(t) & =  \int_0^t \int_\Omega \big((\rho - \bar{\rho})\bar{u}-(n - \bar{n})\bar{v}\big)\cdot \nabla (\phi - \bar{\phi})  dx d\tau, \\
\mathcal{J}_4(t) & = -\varepsilon \int_0^t \int_\Omega \frac{\rho}{\bar{\rho}} \bar{e}_1 \cdot (u - \bar{u}) + \frac{n}{\bar{n}} \bar{e}_2 \cdot (v - \bar{v}) dx d\tau. \\
\end{split}
\end{equation*}
\end{proposition}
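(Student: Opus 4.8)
The plan is to promote the formal identity \eqref{REBEPEvol} to the inequality \eqref{relativeentropyinequality} within the stated integrability class, while accounting for the fact that $(\bar\rho,\bar n)$ solves \eqref{BDD} exactly but only approximately solves \eqref{BEP}, with the residuals $\bar e_1,\bar e_2$ recorded in the lifted system \eqref{BDDlifted}. I would begin by expanding $\Psi(t)=\mathcal{E}(\rho,n\,|\,\bar\rho,\bar n)+\mathcal{K}(\rho,\rho u,n,nv\,|\,\bar\rho,\bar\rho\bar u,\bar n,\bar n\bar v)$ into three groups: the total energy density of the weak solution, a collection of bilinear ``cross'' terms pairing the weak solution against the approximate fields $\bar u,\bar v$ and $h_1'(\bar\rho)+\bar\phi,\,h_2'(\bar n)-\bar\phi$, and terms depending on $(\bar\rho,\bar n)$ alone.

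The ``$\leq$'' sign and the dissipation on the left of \eqref{relativeentropyinequality} originate solely from the weak solution. I would test the dissipation inequality \eqref{weakdissip} against a sequence $\theta$ approximating the indicator of $[0,t]$ to obtain the time-integrated energy inequality for $(\rho,\rho u,n,nv)$, which furnishes $\int_0^t\!\int_\Omega \rho|u|^2+n|v|^2$. The energy of the approximate solution is governed by the exact identity \eqref{strongdissip}, producing $\int_0^t\!\int_\Omega \bar\rho|\bar u|^2+\bar n|\bar v|^2$. After the cross terms are accounted for, the two friction contributions must be shown to reassemble into the relative dissipation $\int_0^t\!\int_\Omega \rho|u-\bar u|^2+n|v-\bar v|^2$; this bookkeeping is the weak-setting analogue of the relative kinetic energy computation \eqref{evolrelativekinetic}.

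The technical core is the evolution of the cross terms, which I would compute by inserting test functions built from the approximate solution into the weak forms \eqref{weak1}--\eqref{weak4}: the vector fields $\bar u,\bar v$ into the momentum equations \eqref{weak2},\eqref{weak4}, and the scalars $h_1'(\bar\rho)+\bar\phi$, $h_2'(\bar n)-\bar\phi$ into the continuity equations \eqref{weak1},\eqref{weak3}. Differentiating and using the definition \eqref{defappsol} together with \eqref{BDDlifted}, the momentum-flux contributions reorganize into $\mathcal{J}_1$; the pressure terms collapse---via thermodynamic consistency \eqref{thermoconsistency} and $S_i=-p_iI$---into the relative pressure term $\mathcal{J}_2$; and the residuals $\bar e_1,\bar e_2$, after division by $\bar\rho,\bar n$ (licit because $(\bar\rho,\bar n)$ is bounded away from vacuum by \textbf{(H)}), produce exactly $\mathcal{J}_4$.

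The delicate piece is the nonlocal electrostatic cross term $-\int_\Omega\nabla\phi\cdot\nabla\bar\phi$ coming from $\tfrac12|\nabla(\phi-\bar\phi)|^2$. Because $\phi,\bar\phi$ are Neumann potentials rather than local functions of the densities, I would appeal to Proposition \ref{neumannprop}: the hypothesis $\gamma_1,\gamma_2\geq\frac{2d}{d+2}$ guarantees $\nabla\phi,\nabla\bar\phi\in L^2(\Omega)$ and validates the integration-by-parts identity $\int_\Omega\nabla\phi\cdot\nabla\bar\phi=\int_\Omega(\rho-n)\bar\phi=\int_\Omega(\bar\rho-\bar n)\phi$, converting the nonlocal pairing into a local one to which \eqref{weak1},\eqref{weak3} apply. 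Summing the time derivatives of all cross terms against the two energy (in)equalities then collapses the Poisson-coupling contributions into the single field term $\mathcal{J}_3$, yielding \eqref{relativeentropyinequality}. The main obstacle is precisely this last reorganization: one must check that every integral generated by differentiating the nonlocal cross term is well defined in the given class and that the symmetric pairings cancel so that only $\mathcal{J}_3$ survives---a step made rigorous by the smoothness and vacuum bounds on $(\bar\rho,\bar n)$ and by Proposition \ref{neumannprop}.
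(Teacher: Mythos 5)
Your proposal is correct and follows essentially the same route as the paper's proof: testing the weak formulations \eqref{weak1}--\eqref{weak4} (for the difference of the two solutions) with test functions built from $\bar u,\bar v$ and $h_1'(\bar\rho)+\bar\phi$, $h_2'(\bar n)-\bar\phi$ cut off by an approximate indicator of $[0,t]$, combining with the dissipation inequality \eqref{weakdissip} and the energy identity \eqref{strongdissip}, invoking the lifted system \eqref{BDDlifted} and hypothesis \textbf{(H)} for the residual term $\mathcal{J}_4$, and using Proposition \ref{neumannprop} together with the no-flux conditions to collapse the electrostatic cross terms into $\mathcal{J}_3$. The only detail left implicit in your plan is that the continuity-equation test functions must also carry the piece $-\varepsilon\tfrac12|\bar u|^2$ (resp. $-\varepsilon\tfrac12|\bar v|^2$) so that the relative kinetic energy bookkeeping you describe closes, exactly as in the paper's choice of $(\varphi,\tilde\varphi,\psi,\tilde\psi)$.
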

\begin{proof}
Fix $t \in [0,T[,$ let $\kappa$ be such that $t+\kappa < T$, and define $\theta : [0,T[\  \to \mathbb{R}$ by 
$$\theta (\tau) = 
\begin{dcases}
1, \ \text{if} \ 0 \leq \tau < t \\ 
\frac{t- \tau}{\kappa}+1, \ \text{if} \ t \leq \tau < t+\kappa \\
0, \ \text{if} \ t+\kappa \leq \tau < T. 
\end{dcases}
 $$ 
 Using this choice of $\theta$ in (\ref{weakdissip}) it yields 
 \begin{equation*}
 \begin{split}
  & \int_t^{t+ \kappa} \int_\Omega \frac{1}{\kappa}\big(\varepsilon \tfrac{1}{2}\rho|u|^2+\varepsilon \tfrac{1}{2}n|v|^2+h_1(\rho)+h_2(n)+ \tfrac{1}{2} |\nabla \phi|^2\big)dxd\tau \\
 & + \int_0^t \int_\Omega \rho|u|^2+n|v|^2dxd\tau + \int_t^{t+ \kappa} \int_\Omega \Big(\frac{t - \tau}{\kappa} + 1\Big) \big(\rho|u|^2+n|v|^2\big)dxd\tau \\
 & \leq  \int_\Omega \big(\varepsilon\tfrac{1}{2}\rho|u|^2+\varepsilon\tfrac{1}{2}n|v|^2+h_1(\rho)+h_2(n) + \tfrac{1}{2} |\nabla \phi|^2 \big)\big|_{\tau = 0}dx.
 \end{split}
\end{equation*}
Letting $\kappa \to 0^+$ above one deduces
\begin{equation} \label{REweakdiss}
 \begin{split}
  & \int_\Omega \big(\varepsilon \tfrac{1}{2}\rho|u|^2+\varepsilon \tfrac{1}{2}n|v|^2+h_1(\rho)+h_2(n)+ \tfrac{1}{2} |\nabla \phi|^2\big)\big|_{\tau = 0}^{\tau = t}dx \\
  & \leq  - \int_0^t \int_\Omega \rho|u|^2+n|v|^2dxd\tau.
 \end{split}
\end{equation}

Next, observe that from \eqref{BDDlifted}, after a straightforward calculation, one obtains
\begin{equation} \label{ue}
 \begin{split}
   & \int_0^t \int_\Omega \bar{u} \cdot \bar{e}_1 \ dx d\tau = \int_\Omega \tfrac{1}{2} \bar{\rho} |\bar{u}|^2  dx \Big |_{\tau=0}^{\tau=t},
   \\ 
  & \int_0^t \int_\Omega \bar{v} \cdot \bar{e}_2 \ dx d\tau = \int_\Omega \tfrac{1}{2} \bar{n} |\bar{v}|^2  dx \Big |_{\tau=0}^{\tau=t} .
 \end{split}
\end{equation}
Using the same choice of $\theta$ in (\ref{strongdissip}) together with (\ref{ue}) gives
\begin{equation} \label{REstrongdiss}
 \begin{split}
  & \int_\Omega \big(\varepsilon \tfrac{1}{2}\bar{\rho}|\bar{u}|^2+\varepsilon \tfrac{1}{2}\bar{n}|\bar{v}|^2+h_1(\bar{\rho})+h_2(\bar{n})+ \tfrac{1}{2} |\nabla \bar{\phi}|^2\big)\big|_{\tau = 0}^{\tau = t}dx \\
  & =  - \int_0^t \int_\Omega \bar{\rho}|\bar{u}|^2+\bar{n}|\bar{v}|^2dxd\tau + \varepsilon \int_0^t \int_\Omega \bar{u} \cdot \bar{e}_1 + \bar{v} \cdot \bar{e}_2 \ dxd\tau.
 \end{split}
\end{equation}
\par 
Regarding the difference $(\rho - \bar{\rho},\rho u - \bar{\rho} \bar{u},n - \bar{n},nv -\bar{n} \bar{v}) $ between a weak solution of (\ref{BEP}) and a strong solution of (\ref{BDD}), one has the following: 
\begin{equation*} 
         -\int_0^T \int_\Omega \varphi_t (\rho - \bar{\rho}) \ dxdt-\int_0^T \int_\Omega \nabla \varphi \cdot (\rho u - \bar{\rho} \bar{u}) dxdt - \int_\Omega \varphi (\rho - \bar{\rho}) \big|_{t=0}dx=0,
\end{equation*}
\begin{equation*} 
        \begin{split}
        -& \varepsilon \int_0^T \int_\Omega \tilde{\varphi}_t \cdot (\rho u- \bar{\rho} \bar{u})dxdt - \varepsilon \int_0^T \int_\Omega \nabla \tilde{\varphi} : (\rho u \otimes u - \bar{\rho} \bar{u} \otimes \bar{u})dx dt \\
        &-  \int_0^T \int_\Omega (\nabla \cdot \tilde{\varphi}) \big(p_1(\rho)-p_1(\bar{\rho}) \big)dxdt  
        - \varepsilon \int_\Omega \tilde{\varphi} \cdot (\rho u - \bar{\rho} \bar{u})\big|_{t=0}dx \\
        =& \  -  \int_0^T \int_\Omega \tilde{\varphi} \cdot (\rho \nabla \phi - \bar{\rho} \nabla \bar{\phi})dxdt - \int_0^T \int_\Omega \tilde{\varphi} \cdot (\rho u - \bar{\rho} \bar{u})dxdt- \varepsilon \int_0^T \int_\Omega \tilde{\varphi} \cdot \bar{e}_1dxdt,
        \end{split}
\end{equation*}
\begin{equation*} 
           -\int_0^T \int_\Omega \psi_t (n - \bar{n}) \ dxdt-\int_0^T \int_\Omega \nabla \psi \cdot (nv - \bar{n} \bar{v}) dxdt - \int_\Omega \psi (n - \bar{n}) \big|_{t=0}dx=0,
\end{equation*}
\begin{equation*} 
        \begin{split}
         -& \varepsilon \int_0^T \int_\Omega \tilde{\psi}_t \cdot (nv - \bar{n} \bar{v})dxdt - \varepsilon \int_0^T \int_\Omega \nabla \tilde{\psi} :(  nv \otimes v - \bar{n} \bar{v} \otimes \bar{v}) dx dt\\
         & -  \int_0^T \int_\Omega (\nabla \cdot \tilde{\psi}) \big(p_2(n)-p_2(\bar{n}) \big) dxdt 
         - \varepsilon \int_\Omega \tilde{\psi} \cdot (nv- \bar{n} \bar{v})\big|_{t=0}dx \\
         = & \  \int_0^T \int_\Omega \tilde{\psi} \cdot (n \nabla \phi - \bar{n} \nabla \bar{\phi})dxdt - \int_0^T \int_\Omega \tilde{\psi} \cdot (nv - \bar{n} \bar{v})dxdt- \varepsilon \int_0^T \int_\Omega \tilde{\psi} \cdot \bar{e}_2dxdt,
        \end{split}
\end{equation*}  
        for all Lipschitz test functions $\varphi, \psi : [0,T[ \times \Omega \to \mathbb{R}, \ \tilde{\varphi}, \tilde{\psi}:[0,T[ \times \Omega \to \mathbb{R}^d$ compactly supported in time and with $\tilde{\varphi}, \tilde{\psi}$ satisfying the no-flux boundary condition on the boundary. \par 
        Set
        $$(\varphi, \ \tilde{\varphi}, \ \psi, \ \tilde{\psi}) =\big( \theta (- \varepsilon \tfrac{1}{2}|\bar{u}|^2+h_1^\prime(\bar{\rho})+ \bar{\phi} ), \  \theta \bar{u},\ \theta (- \varepsilon \tfrac{1}{2}|\bar{v}|^2+h_2^\prime(\bar{n})- \bar{\phi}), \ \theta \bar{v} \big), $$
        where $\theta$ is as before. 
        In view of \eqref{boundarycondBDD}, this choice of $(\varphi, \tilde{\varphi}, \psi, \tilde{\psi})$ satisfies  
        $\tilde{\varphi}\cdot \nu = \tilde{\psi}\cdot \nu = 0$ on $[0,T[ \times \partial \Omega$   and can be used in the weak formulation.
         Using that choice and  letting $\kappa \to 0^+$ one obtains 
\begin{equation} \label{weakdiff1}
    \begin{split}
      & \int_\Omega \big( (-\varepsilon \tfrac{1}{2}|\bar{u}|^2+h_1^\prime(\bar{\rho})+ \bar{\phi} )(\rho - \bar{\rho}) \big)\big|_{\tau = 0}^{\tau = t}dx -\int_0^t \int_\Omega \partial_\tau (- \varepsilon \tfrac{1}{2}|\bar{u}|^2+ h_1^\prime(\bar{\rho})+ \bar{\phi} )(\rho - \bar{\rho})dxd\tau \\
     & -  \int_0^t \int_\Omega \nabla (- \varepsilon \tfrac{1}{2}|\bar{u}|^2+h_1^\prime(\bar{\rho})+ \bar{\phi} ) \cdot (\rho u - \bar{\rho} \bar{u})dxd\tau 
     = 0,
    \end{split}
\end{equation}
\begin{equation} \label{weakdiff2}
    \begin{split}
       \varepsilon & \int_\Omega \big( \bar{u} \cdot (\rho u - \bar{\rho} \bar{u}) \big) \big|_{\tau = 0}^{\tau = t}dx - \varepsilon \int_0^t \int_\Omega (\partial_\tau \bar{u}) \cdot (\rho u - \bar{\rho} \bar{u})dxd\tau \\
       & -  \varepsilon \int_0^t \int_\Omega \nabla \bar{u} : (\rho u \otimes u - \bar{\rho} \bar{u} \otimes \bar{u})dxd\tau- \int_0^t \int_\Omega (\nabla \cdot \bar{u})\big(p_1(\rho)-p_1(\bar{\rho})\big)dxd\tau \\ 
        = & - \int_0^t \int_\Omega \bar{u} \cdot (\rho \nabla \phi - \bar{\rho} \nabla \bar{\phi})dxd\tau - \int_0^t \int_\Omega \bar{u} \cdot (\rho u - \bar{\rho} \bar{u})dxd\tau - \varepsilon \int_0^t \int_\Omega \bar{u} \cdot \bar{e}_1dxd\tau,
    \end{split}
\end{equation}
\begin{equation} \label{weakdiff3}
    \begin{split}
      &\int_\Omega \big( (- \varepsilon \tfrac{1}{2}|\bar{v}|^2+ h_2^\prime(\bar{n})- \bar{\phi} )(n - \bar{n}) \big)\big|_{\tau = 0}^{\tau = t}dx -\int_0^t \int_\Omega \partial_\tau (- \varepsilon \tfrac{1}{2}|\bar{v}|^2+h_2^\prime(\bar{n})- \bar{\phi} )(n - \bar{n})dxd\tau \\
     & -  \int_0^t \int_\Omega \nabla (- \varepsilon \tfrac{1}{2}|\bar{v}|^2+h_2^\prime(\bar{n})- \bar{\phi} ) \cdot (nv - \bar{n} \bar{v})dxd\tau = 0,
   \end{split}
\end{equation}
\begin{equation} \label{weakdiff4}
    \begin{split}
       \varepsilon &\int_\Omega \big( \bar{v} \cdot (nv - \bar{n} \bar{v})\big) \big|_{\tau = 0}^{\tau = t}dx - \varepsilon \int_0^t \int_\Omega (\partial_\tau \bar{v}) \cdot (nv - \bar{n} \bar{v})dxd\tau \\
       & - \varepsilon \int_0^t \int_\Omega \nabla \bar{v} : (nv \otimes v - \bar{n} \bar{v} \otimes \bar{v})dxd\tau- \int_0^t \int_\Omega (\nabla \cdot \bar{v})\big(p_2(n)-p_2(\bar{n})\big)dxd\tau \\ 
        = &  \int_0^t \int_\Omega \bar{v} \cdot (\rho \nabla \phi - \bar{\rho} \nabla \bar{\phi})dxd\tau - \int_0^t \int_\Omega \bar{v} \cdot (nv - \bar{n} \bar{v})dxd\tau - \varepsilon \int_0^t \int_\Omega \bar{v} \cdot \bar{e}_2dxd\tau.
    \end{split}
\end{equation} 
From the computation $(\ref{REweakdiss}) - (\ref{REstrongdiss}) - \big((\ref{weakdiff1})+(\ref{weakdiff2})+(\ref{weakdiff3})+(\ref{weakdiff4})\big)$ it follows that 
\begin{equation} \label{RE3}
 \begin{split}
 \int_\Omega &  \big(\varepsilon\tfrac{1}{2} \rho |u - \bar{u}|^2 + \varepsilon \tfrac{1}{2} n |v - \bar{v}|^2 + h_1(\rho | \bar{\rho}) + h_2(n | \bar{n}) + \tfrac{1}{2}|\nabla(\phi - \bar{\phi})|^2\big) \big|_{\tau = 0}^{\tau = t}dx \\
 \leq & - \int_0^t \int_\Omega \rho |u|^2- \bar{\rho} |\bar{u}|^2 - \bar{u} \cdot (\rho u - \bar{\rho} \bar{u}) dxd\tau \\ 
 & -  \int_0^t \int_\Omega n |v|^2- \bar{n} |\bar{v}|^2 - \bar{v} \cdot (n v - \bar{n} \bar{v}) dxd\tau \\ 
& - \int_0^t \int_\Omega \partial_\tau (-\varepsilon \tfrac{1}{2}|\bar{u}|^2 + h_1^\prime(\bar{\rho}) + \bar{\phi} )(\rho - \bar{\rho})dxd\tau \\
& - \int_0^t \int_\Omega \partial_\tau (-\varepsilon \tfrac{1}{2}|\bar{v}|^2+h_2^\prime(\bar{n}) -\bar{\phi} )(n - \bar{n})dxd\tau \\
& -\varepsilon \int_0^t \int_\Omega (\partial_\tau \bar{u}) \cdot (\rho u - \bar{\rho} \bar{u})dxd\tau -\varepsilon \int_0^t \int_\Omega (\partial_\tau \bar{v}) \cdot (nv - \bar{n} \bar{v})dxd\tau \\
& - \int_0^t \int_\Omega \nabla (-\varepsilon \tfrac{1}{2}|\bar{u}|^2+h_1^\prime(\bar{\rho}) +\bar{\phi} ) \cdot (\rho u - \bar{\rho} \bar{u})dxd\tau \\ 
 & - \int_0^t \int_\Omega \nabla (-\varepsilon \tfrac{1}{2}|\bar{v}|^2+h_2^\prime(\bar{n}) -\bar{\phi} ) \cdot (nv - \bar{n} \bar{v})dxd\tau \\
& - \varepsilon \int_0^t \int_\Omega \nabla \bar{u} : (\rho u \otimes u - \bar{\rho} \bar{u} \otimes \bar{u})dxd\tau- \varepsilon \int_0^t \int_\Omega \nabla \bar{v} : (n v \otimes v - \bar{n} \bar{v} \otimes \bar{v})dxd\tau \\
& - \int_0^t \int_\Omega (\nabla \cdot \bar{u})\big(p_1(\rho)-p_1(\bar{\rho})\big)dxd\tau - \int_0^t \int_\Omega (\nabla \cdot \bar{v})\big(p_2(n)-p_2(\bar{n})\big)dxd\tau 
\\
& +  \int_0^t \int_\Omega \bar{u} \cdot (\rho \nabla \phi - \bar{\rho} \nabla \bar{\phi})dxd\tau -  \int_0^t \int_\Omega \bar{v} \cdot (n \nabla \phi - \bar{n} \nabla \bar{\phi})dxd\tau.
  \end{split}
\end{equation} \par
The strong, bounded away from vacuum solution $(\bar{\rho},\bar{\rho} \bar{u},\bar{n},\bar{n} \bar{v}),$ with $ \bar \phi = N *(\bar \rho - \bar n),$ satisfies the following system 
\begin{equation} \label{uvbarsystem}
 \begin{dcases}
  \varepsilon \big( \bar{u}_t+\bar{u} \cdot \nabla \bar{u} \big) = -\nabla \big(h_1^\prime(\bar{\rho})+\bar{\phi}\big)- \bar{u} + \varepsilon \frac{\bar{e}_1}{\bar{\rho}} 
 \\
  \varepsilon \big( \bar{v}_t+\bar{v} \cdot \nabla \bar{v} \big)= - \nabla \big(h_2^\prime(\bar{n})-\bar{\phi}\big)- \bar{v} + \varepsilon \frac{\bar{e}_2}{\bar{n}}.
 \end{dcases}
\end{equation} 
 Multiplying the first and second equations above by $\rho (u - \bar{u})$ and $n (v - \bar{v})$, respectively, yields: 
\begin{equation} \label{RE4}
 \begin{dcases} 
  \begin{split}
    \varepsilon  &\big(- \tfrac{1}{2} |\bar{u}|^2\big)_t(\rho - \bar{\rho})+\varepsilon \bar{u}_t \cdot (\rho u - \bar{\rho} \bar{u}) + \varepsilon \nabla \big(- \tfrac{1}{2} |\bar{u}|^2\big) \cdot (\rho u - \bar{\rho} \bar{u}) \\
     & + \varepsilon \nabla \bar{u} : (\rho u \otimes u-\bar{\rho} \bar{u} \otimes \bar{u}) \\ 
   = &  -\rho\nabla h_1^\prime(\bar{\rho}) \cdot (u-\bar{u})-\rho \nabla \bar{\phi} \cdot (u-\bar{u})-\rho \bar{u} \cdot (u - \bar{u})\\
   & + \varepsilon \rho \nabla \bar{u} : (u - \bar{u}) \otimes (u - \bar{u})+ \varepsilon \frac{\rho}{\bar{\rho}}\bar{e}_1 \cdot (u- \bar{u})
  \end{split}
 \\ \\
    \begin{split}
     \varepsilon &\big(- \tfrac{1}{2} |\bar{v}|^2\big)_t(n - \bar{n})+ \varepsilon \bar{v}_t \cdot (n v - \bar{v} \bar{v}) + \varepsilon \nabla\big(- \tfrac{1}{2} |\bar{v}|^2\big) \cdot (n v - \bar{n} \bar{v})\\
     & +\nabla \bar{v} : (n v \otimes v-\bar{n} \bar{v} \otimes \bar{v}) \\ 
   = &  -n\nabla h_2^\prime(\bar{n}) \cdot (v-\bar{v})+n \nabla \bar{\phi} \cdot (v-\bar{v})-n \bar{v} \cdot (v - \bar{v})\\
    & + \varepsilon  \nabla \bar{v} : (v - \bar{v}) \otimes (v - \bar{v})+\varepsilon \frac{n}{\bar{n}}\bar{e}_2 \cdot (v- \bar{v}).
  \end{split}
\end{dcases}
\end{equation} 
Substituting (\ref{RE4}) into (\ref{RE3}) renders that \\ 
\begin{equation} \label{RE5}
 \begin{split}
  \int_\Omega & \big( \varepsilon \tfrac{1}{2} \rho |u - \bar{u}|^2+\varepsilon \tfrac{1}{2} n |v - \bar{v}|^2 +  h_1(\rho | \bar{\rho}) +  h_2(n | \bar{n}) + \tfrac{1}{2} |\nabla (\phi - \bar{\phi})|^2 \big)  \big|_{\tau=0}^{\tau=t}dx \\
   \leq & -  \int_0^t \int_\Omega \rho |u|^2- \bar{\rho} |\bar{u}|^2 - \bar{u} \cdot (\rho u - \bar{\rho} \bar{u}) - \rho \bar{u} \cdot (u - \bar{u})dxd\tau \\ 
   & -  \int_0^t \int_\Omega n |v|^2- \bar{n} |\bar{v}|^2 - \bar{v} \cdot (n v - \bar{n} \bar{v})- n \bar{v} \cdot (v - \bar{v})  dxd\tau \\ 
   & -\varepsilon \int_0^t \int_\Omega \nabla \bar{u} : \rho (u-\bar{u}) \otimes (u - \bar{u}) + \nabla \bar{v} : n (v-\bar{v}) \otimes (v - \bar{v})dxd\tau \\
   & -  \int_0^t \int_\Omega \partial_\tau \big(h_1^\prime(\bar{\rho}) \big)(\rho - \bar{\rho}) + \nabla h_1^\prime(\bar{\rho}) \cdot (\rho u -\bar{\rho} \bar{u}) dxd\tau \\
   & -  \int_0^t \int_\Omega (\nabla \cdot \bar{u}) \big(p_1(\rho) - p_1(\bar{\rho}) \big) - \nabla h_1^\prime(\bar{\rho}) \cdot (\rho u -\rho \bar{u}) dxd\tau \\
   & -  \int_0^t \int_\Omega \partial_\tau \big(h_2^\prime(\bar{n}) \big)(n - \bar{n}) + \nabla h_2^\prime(\bar{n}) \cdot (n v -\bar{n} \bar{v}) dxd\tau \\
   & -  \int_0^t \int_\Omega (\nabla \cdot \bar{u}) \big(p_2(n) - p_2(\bar{n}) \big) - \nabla h_2^\prime(\bar{n}) \cdot (n v -n \bar{v}) dxd\tau \\
   & +  \int_0^t \int_\Omega - (\partial_\tau \bar{\phi})(\rho - \bar{\rho}) - \nabla \bar{\phi} \cdot (\rho u - \bar{\rho} \bar{u}) dx d\tau \\
   & +  \int_0^t \int_\Omega \bar{u} \cdot (\rho \nabla \phi - \bar{\rho} \nabla \bar{\phi}) + \rho \nabla \bar{\phi} \cdot (u - \bar{u}) dx d\tau \\  
    & -  \int_0^t \int_\Omega - (\partial_\tau \bar{\phi})(n - \bar{n}) - \nabla \bar{\phi} \cdot (n v - \bar{n} \bar{v}) dx d\tau \\
   & -  \int_0^t \int_\Omega \bar{v} \cdot (\rho \nabla \phi - \bar{n} \nabla \bar{\phi}) + n \nabla \bar{\phi} \cdot (v - \bar{v}) dx d\tau \\
   & - \varepsilon \int_0^t \int_\Omega \frac{\rho}{\bar{\rho}} \bar{e}_1 \cdot (u - \bar{u}) + \frac{n}{\bar{n}} \bar{e}_2 \cdot (v - \bar{v}) dx d\tau.
 \end{split}
\end{equation} 
A simple calculation provides
\begin{equation} \label{RE6}
 -  \int_0^t \int_\Omega \rho |u|^2- \bar{\rho} |\bar{u}|^2 - \bar{u} \cdot (\rho u - \bar{\rho} \bar{u}) - \rho \bar{u} \cdot (u - \bar{u})dxd\tau = -  \int_0^t \int_\Omega \rho |u - \bar{u}|^2 dxd \tau,
\end{equation}
\begin{equation} \label{RE7}
 -  \int_0^t \int_\Omega n |v|^2- \bar{n} |\bar{v}|^2 - \bar{v} \cdot (n v - \bar{n} \bar{v})- n \bar{v} \cdot (v - \bar{v})  dxd\tau = -  \int_0^t \int_\Omega n |v - \bar{v}|^2 dxd \tau.
\end{equation} 
Additionally, since $\bar{\rho}_t+\nabla \cdot (\bar{\rho} \bar{u}) = 0$ and $\bar{n}_t+\nabla \cdot (\bar{n} \bar{v}) = 0$ one derives
\begin{equation} \label{RE8}
\begin{split}
  - &  \int_0^t \int_\Omega \partial_\tau \big(h_1^\prime(\bar{\rho}) \big)(\rho - \bar{\rho}) + \nabla h_1^\prime(\bar{\rho}) \cdot (\rho u -\bar{\rho} \bar{u}) dxd\tau \\
   & -  \int_0^t \int_\Omega (\nabla \cdot \bar{u}) \big(p_1(\rho) - p_1(\bar{\rho}) \big) - \nabla h_1^\prime(\bar{\rho}) \cdot (\rho u -\rho \bar{u}) dxd\tau\\
    = & -  \int_0^t \int_\Omega (\nabla \cdot \bar{u}) p_1(\rho | \bar{\rho})dx d\tau,
\end{split}
\end{equation}
\begin{equation} \label{RE9}
\begin{split}
 - &  \int_0^t \int_\Omega \partial_\tau \big( h_2^\prime(\bar{n}) \big) (n - \bar{n}) + \nabla h_2^\prime(\bar{n}) \cdot (n v -\bar{n} \bar{v}) dxd\tau \\
   & -  \int_0^t \int_\Omega (\nabla \cdot \bar{v}) \big(p_2(n) - p_2(\bar{n}) \big) - \nabla h_2^\prime(\bar{n}) \cdot (n v -\rho \bar{v}) dxd\tau \\
    = & -  \int_0^t \int_\Omega (\nabla \cdot \bar{v}) p_2(n | \bar{n})dx d\tau.
    \end{split}
\end{equation}
Moreover, the second equality of identity (\ref{intparts}) and the no-flux boundary conditions (\ref{nofluxrhou}) imply that
\begin{equation} \label{RE10}
 \begin{split}
   &   \int_0^t \int_\Omega - (\partial_\tau \bar{\phi})(\rho - \bar{\rho}) - \nabla \bar{\phi} \cdot (\rho u - \bar{\rho} \bar{u}) dx d\tau \\
   & +  \int_0^t \int_\Omega \bar{u} \cdot (\rho \nabla \phi - \bar{\rho} \nabla \bar{\phi}) + \rho \nabla \bar{\phi} \cdot (u - \bar{u}) dx d\tau \\  
    & -  \int_0^t \int_\Omega - (\partial_\tau \bar{\phi})(n - \bar{n}) - \nabla \bar{\phi} \cdot (n v - \bar{n} \bar{v}) dx d\tau \\
   & -  \int_0^t \int_\Omega \bar{v} \cdot (\rho \nabla \phi - \bar{n} \nabla \bar{\phi}) + n \nabla \bar{\phi} \cdot (v - \bar{v}) dx d\tau \\
   = &   \int_0^t \int_\Omega - (\rho - \bar{\rho} - n + \bar{n})(\partial_\tau \bar{\phi}) + \nabla(\phi - \bar{\phi}) \cdot (\rho \bar{u} - n \bar{v})  dx d\tau \\
   = &  \int_0^t \int_\Omega \big((\rho - \bar{\rho})\bar{u} - (n - \bar{n})\bar{v} \big) \cdot \nabla (\phi - \bar{\phi}) dx d\tau.
 \end{split}
\end{equation}
Finally, replacing (\ref{RE6}), (\ref{RE7}), (\ref{RE8}), (\ref{RE9}) and (\ref{RE10})  in (\ref{RE5}) yields \\
\begin{equation*}
 \begin{split}
   \int_\Omega & \big( \varepsilon \tfrac{1}{2} \rho |u - \bar{u}|^2+\varepsilon \tfrac{1}{2} n |v - \bar{v}|^2 +  h_1(\rho | \bar{\rho}) +  h_2(n | \bar{n}) + \tfrac{1}{2} |\nabla (\phi - \bar{\phi})|^2 \big)  \big|_{\tau=0}^{\tau=t}dx \\
    \leq & -  \int_0^t \int_\Omega \rho |u - \bar{u}|^2 + n |v - \bar{v}|^2 dx d\tau \\
    & - \varepsilon \int_0^t \int_\Omega \nabla \bar{u}: \rho (u - \bar{u}) \otimes (u - \bar{u})+\nabla \bar{v}: n (v - \bar{v}) \otimes (v - \bar{v})dxd \tau \\
    & -   \int_0^t \int_\Omega (\nabla \cdot \bar{u}) p_1(\rho | \bar{\rho}) + (\nabla \cdot \bar{v}) p_2(n | \bar{n}) dx d\tau \\
    & +  \int_0^t \int_\Omega \big((\rho - \bar{\rho})\bar{u}-(n - \bar{n})\bar{v}\big)\cdot \nabla (\phi - \bar{\phi})  dx d\tau \\
   & - \varepsilon \int_0^t \int_\Omega \frac{\rho}{\bar{\rho}} \bar{e}_1 \cdot (u - \bar{u}) + \frac{n}{\bar{n}} \bar{e}_2 \cdot (v - \bar{v}) dx d\tau,
 \end{split}
\end{equation*} \\
which completes the proof.
\end{proof}

\subsection{Bounds in terms of the relative energy}

\begin{lemma} \label{lemmaJ1}
 Under the conditions of Proposition \ref{relativeentropy},
 $$\mathcal{J}_1(t) \leq C \int_0^t \Psi(\tau)d\tau, \ \ \ t \in [0,T[, $$
 for some positive constant $C$.
\end{lemma}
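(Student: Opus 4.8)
The plan is to exploit the boundedness of $\nabla\bar u$ and $\nabla\bar v$ and then recognize that the resulting integrand is, up to a constant, controlled by the relative kinetic energy contained in $\Psi$. First I would record that the strong-solution hypothesis delivers the needed regularity: since $\bar u = -\nabla\big(h_1'(\bar\rho)+\bar\phi\big)$ and $\bar v = -\nabla\big(h_2'(\bar\rho)-\bar\phi\big)$ by \eqref{defappsol}, the matrices $\nabla\bar u$ and $\nabla\bar v$ are built from the second-order spatial derivatives $\partial^2_{x_i x_j}\bar\rho,\ \partial^2_{x_i x_j}\bar n,\ \partial^2_{x_i x_j}\bar\phi$ (together with $h_1'',h_2''$ evaluated on the range $[\delta_1,M_1]$, $[\delta_2,M_2]$ where these are bounded by \textbf{(H)}), all of which lie in $L^\infty([0,T[\times\Omega)$ by the definition of a strong solution. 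Hence there is a constant $\Lambda\coloneqq\max\big\{\|\nabla\bar u\|_{L^\infty},\|\nabla\bar v\|_{L^\infty}\big\}<+\infty$.

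Next I would apply the elementary pointwise estimate for the contraction of a matrix with a rank-one tensor, namely
$$
\big|\nabla\bar u:\rho(u-\bar u)\otimes(u-\bar u)\big|\le \|\nabla\bar u\|_{L^\infty}\,\rho|u-\bar u|^2,
$$
and likewise for the $\bar v$-term. Integrating and using the definition of $\mathcal{J}_1$ then gives
$$
\mathcal{J}_1(t)\le \varepsilon\Lambda\int_0^t\int_\Omega \rho|u-\bar u|^2+n|v-\bar v|^2\,dx\,d\tau.
$$

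The final step is to absorb this into $\Psi$. Observe that each of the remaining summands in $\Psi$ is nonnegative: the relative internal energies satisfy $h_1(\rho|\bar\rho)\ge 0$, $h_2(n|\bar n)\ge 0$ by convexity of $h_1,h_2$ (which holds since $h_i''>0$), and $\tfrac12|\nabla(\phi-\bar\phi)|^2\ge 0$ trivially. Consequently
$$
\varepsilon\int_\Omega \rho|u-\bar u|^2+n|v-\bar v|^2\,dx
=2\int_\Omega \varepsilon\tfrac12\rho|u-\bar u|^2+\varepsilon\tfrac12 n|v-\bar v|^2\,dx\le 2\Psi(\tau),
$$
so that $\mathcal{J}_1(t)\le 2\Lambda\int_0^t\Psi(\tau)\,d\tau$, which is the claim with $C=2\Lambda$. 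There is essentially no serious obstacle here; the only point requiring care is the bookkeeping that the factor $\varepsilon$ multiplying $\mathcal{J}_1$ is exactly the $\varepsilon$ weighting the relative kinetic energy in $\Psi$, so it cancels cleanly and no negative power of $\varepsilon$ is introduced — this is what makes the bound uniform in $\varepsilon$ and suitable for the subsequent Gronwall argument.
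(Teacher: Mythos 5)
Your proof is correct and follows essentially the same route as the paper's: pull out $\|\nabla\bar u\|_{L^\infty}$ and $\|\nabla\bar v\|_{L^\infty}$, observe that the $\varepsilon$ prefactor matches the $\varepsilon$ weighting of the relative kinetic energy inside $\Psi$, and bound the result by $C\int_0^t\Psi(\tau)\,d\tau$. (Minor slip: in \eqref{defappsol} it is $\bar v=-\nabla\big(h_2'(\bar n)-\bar\phi\big)$, with $\bar n$ rather than $\bar\rho$.)
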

\begin{proof}
 Note that for $t \in [0,T[,$ 
 \begin{equation*}
  \begin{split}
    \mathcal{J}_1(t) &=  -\varepsilon \int_0^t \int_\Omega \nabla \bar{u}: \rho (u - \bar{u}) \otimes (u - \bar{u})+\nabla \bar{v}: n (v - \bar{v}) \otimes (v - \bar{v})dxd \tau \\
   & \leq ( ||\nabla \bar{u} ||_{\infty} + ||\nabla \bar{v} ||_{\infty}) \int_0^t \int_\Omega \varepsilon \rho |u - \bar{u}|^2+\varepsilon  n |v - \bar{v}|^2dxd\tau \\
   & \leq C \int_0^t \Psi(\tau)d\tau.
  \end{split}
\end{equation*}
\end{proof}
\begin{lemma}
Under the conditions of Proposition \ref{relativeentropy},
 $$\mathcal{J}_2(t) \leq C \int_0^t \Psi(\tau)d\tau, \ \ \ t \in [0,T[, $$
 for some positive constant $C$.
\end{lemma}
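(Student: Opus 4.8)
The plan is to peel off the $L^\infty$ bounds on $\nabla \cdot \bar u$ and $\nabla \cdot \bar v$ and then reduce the estimate to a purely pointwise comparison between the relative pressure $p_i(\cdot\,|\,\cdot)$ and the relative internal energy $h_i(\cdot\,|\,\cdot)$. Since $\bar u = -\nabla\big(h_1'(\bar\rho)+\bar\phi\big)$ and $\bar v = -\nabla\big(h_2'(\bar n)-\bar\phi\big)$, the divergences $\nabla\cdot\bar u = -\Delta\big(h_1'(\bar\rho)+\bar\phi\big)$ and $\nabla\cdot\bar v = -\Delta\big(h_2'(\bar n)-\bar\phi\big)$ are expressed through the second-order spatial derivatives of $\bar\rho,\bar n,\bar\phi$, which lie in $L^\infty(]0,T[\times\Omega)$ by the definition of a strong solution (and $h_i''>0$ is bounded on $[\delta_i,M_i]$). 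Hence there is a constant $C$ with $\|\nabla\cdot\bar u\|_\infty + \|\nabla\cdot\bar v\|_\infty \le C$, so that
$$
\mathcal{J}_2(t) \le C \int_0^t \int_\Omega |p_1(\rho\,|\,\bar\rho)| + |p_2(n\,|\,\bar n)| \, dx\, d\tau.
$$

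Next I would establish the key pointwise estimate: for $\bar r \in [\delta_i, M_i]$ and every $r \ge 0$,
$$
|p_i(r\,|\,\bar r)| \le \hat k_i \, h_i(r\,|\,\bar r).
$$
To prove this, write both relative quantities in Taylor integral form, which after reorienting the interval reads, for $r>0$,
$$
p_i(r\,|\,\bar r) = \int_{\min(r,\bar r)}^{\max(r,\bar r)} |r-s|\, p_i''(s)\, ds, \qquad h_i(r\,|\,\bar r) = \int_{\min(r,\bar r)}^{\max(r,\bar r)} |r-s|\, h_i''(s)\, ds,
$$
and extend to $r=0$ by continuity of $p_i,h_i$ on $[0,+\infty[$. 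Thermodynamic consistency \eqref{thermoconsistency} gives $h_i''(s)=p_i'(s)/s>0$, so $h_i(r\,|\,\bar r)\ge 0$ (convexity of $h_i$), and the growth bound \eqref{pbound}, $|p_i''(s)| \le \hat k_i\, p_i'(s)/s$, inserted into the first integral and compared termwise with the second yields the claim at once. In the prototypical case $p_i(r)=k_i r^{\gamma_i}$ one has exactly $p_i(r\,|\,\bar r)=(\gamma_i-1)\,h_i(r\,|\,\bar r)$ with $\hat k_i=\gamma_i-1$, so the estimate is sharp. I note that the integration interval has endpoints in $[0,M_i]$ with $s>0$ on its interior, so $p_i'(s)/s$ is well defined, and the integrals converge near $s=0$ because $p_i'$ is integrable there.

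Inserting this pointwise bound gives
$$
\mathcal{J}_2(t) \le C \int_0^t \int_\Omega h_1(\rho\,|\,\bar\rho) + h_2(n\,|\,\bar n)\, dx\, d\tau \le C \int_0^t \Psi(\tau)\, d\tau,
$$
where the last inequality holds because the remaining contributions to $\Psi$ — the relative kinetic energies and $\tfrac12|\nabla(\phi-\bar\phi)|^2$ — are nonnegative, so that $\int_\Omega h_1(\rho\,|\,\bar\rho)+h_2(n\,|\,\bar n)\,dx \le \Psi(\tau)$.

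The estimate is comparatively soft. The only genuinely delicate point is the pointwise comparison $|p_i(r\,|\,\bar r)| \le \hat k_i\, h_i(r\,|\,\bar r)$, which must hold uniformly in the unbounded variable $r$ and therefore rests essentially on the structural hypothesis \eqref{pbound} coupled with \eqref{thermoconsistency}. I emphasize that this argument needs neither the lower exponent restriction $\gamma_i \ge 2-\tfrac1d$ nor the finer two-regime bound of Lemma \ref{hlemma}; those refinements are required only for the electric-field term $\mathcal{J}_3$, which is the part of the analysis that genuinely demands the Riesz-potential machinery.
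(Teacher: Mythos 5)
Your proposal is correct and follows essentially the same route as the paper: both hinge on the pointwise comparison $|p_i(r\,|\,\bar r)| \le \hat k_i\, h_i(r\,|\,\bar r)$ obtained by writing the relative pressure as a Taylor remainder and inserting \eqref{pbound} together with $h_i''(s)=p_i'(s)/s$ from \eqref{thermoconsistency}, before peeling off $\|\nabla\cdot\bar u\|_\infty+\|\nabla\cdot\bar v\|_\infty$. The only differences are cosmetic (single-integral versus double-integral remainder, and your slightly more explicit handling of the absolute value and of the boundedness of $\nabla\cdot\bar u$, $\nabla\cdot\bar v$).
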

\begin{proof}
 From conditions (\ref{pbound}) and \eqref{thermoconsistency} it follows that 
 \begin{equation*}
  \begin{split}
   p_i(r| \bar{r}) &= (r-\bar{r})^2 \int_0^1 \int_0^\tau p_i^{\prime \prime}\big(sr+(1-s)\bar{r}\big)dsd\tau \\
     & \leq (r-\bar{r})^2 \hat{k}_i  \int_0^1 \int_0^\tau h_i^{\prime \prime}\big(sr+(1-s)\bar{r}\big)dsd\tau \\
     & \leq \hat{k}_i h_i(r | \bar{r}).
  \end{split}
\end{equation*} \par
Thus, for $t \in [0,T[,$
 \begin{equation*}
  \begin{split}
    \mathcal{J}_2(t) &=  - \int_0^t \int_\Omega (\nabla \cdot \bar{u}) p_1(\rho | \bar{\rho})+(\nabla \cdot \bar{v}) p_2(n | \bar{n})dxd \tau \\
   & \leq ( ||\nabla \cdot \bar{u} ||_{\infty} + ||\nabla \cdot \bar{v} ||_{\infty})(\hat{k}_1+\hat{k}_2) \int_0^t \int_\Omega h_1(\rho | \bar{\rho})+h_2(n | \bar{n})dxd\tau \\
   & \leq C \int_0^t \Psi(\tau)d\tau.
  \end{split}
\end{equation*}
\end{proof}

\begin{lemma} \label{lemmaJ3}
 Under the conditions of Proposition \ref{relativeentropy} and for $\gamma_1, \gamma_2 \geq 2-\frac{1}{d}$,
 $$\mathcal{J}_3(t) \leq C \int_0^t \Theta(\tau)d\tau, \ \ \ t \in [0,T[, $$
 for some positive constant $C$.
\end{lemma}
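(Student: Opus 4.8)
The plan is to strip off the bounded velocities and then estimate the resulting scalar integral by splitting the density into a bounded part, where the relative energy gives $L^2$ control, and a large part, where it only gives $L^{\gamma_i}$ control; the latter is handled through the mapping properties of the Neumann function and the Riesz potential. Since $(\bar{\rho},\bar{n})$ is strong and bounded away from vacuum, the velocities $\bar{u},\bar{v}$ from \eqref{defappsol} satisfy $\|\bar{u}\|_\infty,\|\bar{v}\|_\infty\le C$, so that
\[
\mathcal{J}_3(t)\le C\int_0^t\int_\Omega\big(|\rho-\bar{\rho}|+|n-\bar{n}|\big)\,|\nabla(\phi-\bar{\phi})|\,dx\,d\tau .
\]
It therefore suffices to bound the inner spatial integral by $C\,\Psi(\tau)$. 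I would use two facts about $\nabla(\phi-\bar{\phi})$: first, $\|\nabla(\phi-\bar{\phi})\|_{L^2(\Omega)}^2\le 2\Psi$, read off directly from the definition of $\Psi$; and second, writing $\sigma\coloneqq(\rho-\bar{\rho})-(n-\bar{n})$ so that $\phi-\bar{\phi}=N\ast\sigma$, property (iii) of the Neumann function gives the pointwise domination $|\nabla(\phi-\bar{\phi})|\le C\,I_1(|\tilde{\sigma}|)$ by a Riesz potential of the zero-extension $\tilde{\sigma}$ of $\sigma$.

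Next I would invoke Lemma \ref{hlemma}. Because $\bar{\rho}\in[\delta_1,M_1]$, $h_1''>0$ and \eqref{ho(g)} holds, there is $R$ and constants with $h_1(\rho|\bar{\rho})\ge C_1|\rho-\bar{\rho}|^2$ on $\{\rho\le R\}$ and $h_1(\rho|\bar{\rho})\ge C_2|\rho-\bar{\rho}|^{\gamma_1}$ on $\{\rho>R\}$, and similarly for $n$ with exponent $\gamma_2$. On the bounded-density set the estimate is immediate: by Cauchy--Schwarz and Lemma \ref{hlemma},
\[
\int_{\{\rho\le R\}}|\rho-\bar{\rho}|\,|\nabla(\phi-\bar{\phi})|\,dx\le C\Big(\int_\Omega h_1(\rho|\bar{\rho})\,dx\Big)^{1/2}\|\nabla(\phi-\bar{\phi})\|_{L^2}\le C\,\Psi ,
\]
and the same for $n$, so these contributions are already linear in $\Psi$.

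The delicate contribution is the large-density set, where only $L^{\gamma_i}$ control is available. Here I would apply H\"older with exponents $\gamma_1,\gamma_1'$, using $\|\rho-\bar{\rho}\|_{L^{\gamma_1}(\{\rho>R\})}\le C\,\Psi^{1/\gamma_1}$ from Lemma \ref{hlemma}, and control the field factor in $L^{\gamma_1'}$ through the Riesz estimate: by Proposition \ref{steinprop} with $\alpha=1$ and $p=\gamma$, where $\gamma\coloneqq\min(\gamma_1,\gamma_2)$, one has $\|\nabla(\phi-\bar{\phi})\|_{L^{\gamma^\ast}}\le C\|\sigma\|_{L^{\gamma}}$ with $\gamma^\ast=d\gamma/(d-\gamma)$, while the hypothesis $\gamma_1,\gamma_2\ge 2-\tfrac1d$ is exactly what yields $\gamma^\ast\ge\gamma'$ and hence $L^{\gamma^\ast}(\Omega)\hookrightarrow L^{\gamma_1'}(\Omega),L^{\gamma_2'}(\Omega)$. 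Applying the same bounded/large splitting to $\sigma$ itself, together with $\gamma\le 2$, gives $\|\sigma\|_{L^{\gamma}}\le C\,\Psi^{1/2}$ (when instead some $\gamma_i\ge2$, the last assertion of Lemma \ref{hlemma} furnishes $L^2$ control everywhere and the argument is only easier). Combining, the large-density piece is bounded by $C\,\Psi^{1/\gamma_1+1/2}$, whose exponent is $\ge1$ because $\gamma_1\le2$; since $\sup_{[0,T[}\Psi<\infty$ by the energy bounds \eqref{energyconservation} and \eqref{energyconservationDD}, any power $\ge1$ of $\Psi$ is $\le C\,\Psi$. The $n$-piece is identical with $\gamma_1$ replaced by $\gamma_2$, completing the bound of the inner integral by $C\,\Psi(\tau)$.

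The main obstacle is precisely this large-density set. The antisymmetric charge $\rho-n$ together with the distinct velocities $\bar{u}\neq\bar{v}$ prevents the single-species manipulation of \cite{gasdynamics}, where the Poisson equation rewrites the electric term as something controlled by $\|\nabla(\phi-\bar{\phi})\|_{L^2}^2$; consequently one cannot avoid estimating a product in which one factor is controlled only in $L^{\gamma_i}$. The content of the lemma is that the Neumann-function bound and the Riesz-potential inequality let one borrow the missing integrability for the field, and that the threshold $\gamma_i\ge2-\tfrac1d$ is exactly what makes the H\"older exponents admissible and the resulting power of $\Psi$ at least linear.
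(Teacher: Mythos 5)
Your proposal uses the same key ingredients as the paper (the splitting of the density into a bounded part with $L^2$ control and a large part with $L^{\gamma_i}$ control via Lemma \ref{hlemma}, and the Neumann--Riesz potential estimate for $\nabla(\phi-\bar\phi)$), but the endgame is genuinely different, and the difference matters. The paper applies H\"older with the exponents $q=\tfrac{2}{3-\gamma}$, $q'=\tfrac{q}{q-1}$ and chooses $p=\tfrac{2d}{d(\gamma-1)+2}$ so that $q'=\tfrac{dp}{d-p}$ and $p\le q$ precisely when $\gamma\ge 2-\tfrac1d$; this reduces $J(t)$ to $\bigl(\int(|\rho-\bar\rho|+|n-\bar n|)^q\bigr)^{2/q}$, and on the large-density set the exponent $q$ is then interpolated between $L^1$ and $L^{\gamma}$ with $\tfrac1q=\tfrac{\theta}{\gamma}+(1-\theta)$, $\theta=\tfrac{\gamma}{2}$, so that the superlinear power is absorbed by the \emph{conserved mass}, $\bigl(\int|\rho-\bar\rho|\bigr)^{(1-\theta)q\cdot\frac2q}\le (M+\bar M)^{2-\gamma}$. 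You instead pair $L^{\gamma_1}$ with $L^{\gamma_1'}$ on the large set and end up with powers $\Psi^{1/\gamma}$ and $\Psi^{1/\gamma_1+1/2}$, which you reduce to $\Psi^{1/2}$ and $\Psi$ by invoking $\sup_{[0,T[}\Psi<\infty$. This is where your route is weaker: \eqref{energyconservation} gives finiteness of the energy for each fixed $\varepsilon$-solution, not a bound uniform in $\varepsilon$, so your constant $C$ a priori depends on the individual weak solution, whereas the paper's $C$ depends only on $M+\bar M$ and the other listed quantities. Since the whole point of the lemma is to feed a Gronwall estimate whose constant must be $\varepsilon$-independent for Theorem \ref{mainresult} to yield convergence, you would need an extra step (e.g.\ showing that $\Psi(0)$ bounded plus the mass bounds force a uniform bound on the initial total energy, hence on $\sup_t\Psi$ via \eqref{weakdissip}) before your argument closes; the paper's interpolation against the mass avoids this entirely. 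Two smaller points: your assertion that the exponent $\tfrac{1}{\gamma_1}+\tfrac12\ge1$ ``because $\gamma_1\le2$'' is only legitimate in the sub-case $\gamma_1<2$ where the splitting is actually performed (your parenthetical about $\gamma_i\ge2$ covers the complement, but the logic should be stated as a genuine case distinction); and your embedding step needs $\gamma^\ast\ge\gamma'$, which holds for $\gamma\ge\tfrac{2d}{d+1}$ and is therefore comfortably implied by $\gamma\ge2-\tfrac1d$, so that part is fine.
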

\begin{proof}
 Let $\gamma = \min \{\gamma_1,\gamma_2 \}.$ The proof is divided into two cases: $\gamma \geq 2$ and $\gamma \in [2-\frac{1}{d},2[$. 
 
 \medskip
 \noindent
 \textit{Case $\gamma \geq 2$} : 
 Using inequality $ab \leq \tfrac{1}{2}a^2 + \tfrac{1}{2}b^2$ and Lemma \ref{hlemma}, one derives
  \begin{equation*}
  \begin{split}
    \mathcal{J}_3(t) &=   \int_0^t \int_\Omega \big((\rho - \bar{\rho})\bar{u}-(n - \bar{n})\bar{v}\big)\cdot \nabla (\phi - \bar{\phi})  dx d\tau \\
   & \leq ( ||\bar{u} ||_{\infty} + ||\bar{v} ||_{\infty})\int_0^t \int_\Omega |\rho - \bar{\rho}||\nabla (\phi-\bar{\phi})|+|n - \bar{n}||\nabla (\phi-\bar{\phi})| dxd\tau \\
   & \leq C \int_0^t \int_\Omega \Big(|\rho - \bar{\rho}|^2+|n - \bar{n}|^2+|\nabla (\phi -\bar{\phi})|^2 \Big) dxd\tau \\
   & \leq C \int_0^t \int_\Omega h_1(\rho| \bar{\rho})+h_2(n| \bar{n})+|\nabla (\phi-\bar{\phi})|^2dxd\tau \\
   & \leq C \int_0^t \Psi(\tau)d\tau, \qquad  t \in [0,T[.
   \end{split}
\end{equation*}

\medskip
\noindent
\textit{Case $\gamma \in [2-\frac{1}{d}, 2[$} : Fix $t \in [0,T[$ and let $q= \frac{2}{3-\gamma}, \ q'= \frac{q}{q-1},$ and 
$p=\frac{2d}{d(\gamma -1)+2},$ so that $q^\prime = \frac{dp}{d-p}$. Since $\gamma \in [2-\frac{1}{d},2[,$ then $1<p \leq q < \gamma < 2$.
Set $J(t) := \int_\Omega\big( (\rho - \bar \rho) \bar u - (n - \bar n) \bar v \big) \cdot \nabla (\phi - \bar \phi) dx$ and note that                                                                                                                                                                        
\begin{equation}\label{estim1}
  \begin{split}
     J(t)   & \leq \Big |  \int_\Omega \big((\rho - \bar{\rho})\bar{u}-(n - \bar{n})\bar{v}\big)\cdot \nabla (\phi - \bar{\phi})  dx \Big |
     \\
    & \leq  ( ||\bar{u} ||_{\infty} + ||\bar{v} ||_{\infty}) \int_\Omega (|\rho - \bar{\rho}|+|n - \bar{n}|) |\nabla (\phi - \bar{\phi})|  dx \\
   & \leq C \Big( \int_\Omega(|\rho - \bar{\rho}|+|n - \bar{n}|)^q dx \Big)^{\frac{1}{q}} \Big( \int_\Omega |\nabla (\phi - \bar{\phi})|^{q'} dx \Big)^{\frac{1}{q'}}. 
   \end{split}
\end{equation}

Consider the Neumann problem
\begin{equation*}
\begin{cases}
- \Delta (\phi - \bar \phi ) = \rho - n - \bar \rho + \bar n  & \mbox{in $\Omega$}
\\
\; \; \frac{ \partial}{\partial \nu} (\phi - \bar \phi) = 0 & \mbox{on  $\partial \Omega$}.
\end{cases}
\end{equation*}
Let 
$ f = \rho - n - \bar \rho + \bar n$ and $\varphi = \nabla (\phi -\bar{\phi}). $
Then $f \in L^\gamma(\Omega) \subseteq L^p(\Omega)$ and $\varphi = \nabla _x N * f.$ Define $\tilde{f}$ by 
 $$
 \tilde{f} = 
   \begin{cases}
    f, \ \text{in} \ \Omega
     \\
    0, \ \text{in} \ \mathbb{R}^d\setminus\Omega.
   \end{cases}
$$
Clearly $\tilde{f} \in L^p(\mathbb{R}^d),$ and from the properties of the Neumann function one deduces that 
$$|\varphi(x)| \leq C I_1(|\tilde{f}|)(x), \ \ x \in \Omega. $$ Thus, Proposition \ref{steinprop} with $\alpha = 1$ and $p = \frac{2d}{d(\gamma -1)+2}$ implies that 
\begin{equation}\label{estim2}
  \Big( \int_\Omega |\nabla (\phi - \bar{\phi})|^{q'} dx \Big)^{\frac{1}{q'}}  = ||\varphi ||_{L^{\frac{dp}{d-p}}(\Omega)} 
    \leq C ||I_1(|\tilde{f}|) ||_{L^{\frac{dp}{d-p}}(\Omega)} 
    \leq C ||f ||_{L^p(\Omega)}. 
  \end{equation}
Furthermore, choosing $r > 0$ so that $\frac{1}{r} = \frac{1}{p} - \frac{1}{q}$ it yields $$||f ||_{L^p(\Omega)}  \leq |\Omega|^\frac{1}{r} ||f ||_{L^q(\Omega)}.$$ 
Combining \eqref{estim1} and \eqref{estim2} gives
\begin{equation}\label{estim3}
  \begin{split}
    J(t) & \leq  C \Big( \int_\Omega(|\rho - \bar{\rho}|+|n - \bar{n}|)^q dx \Big)^{\frac{1}{q}} \Big( \int_\Omega(|\rho - \bar{\rho}-n + \bar{n}|)^q dx \Big)^{\frac{1}{q}}.\\
   & \leq C \Big( \int_\Omega(|\rho - \bar{\rho}|+|n - \bar{n}|)^q dx \Big)^{\frac{2}{q}} \\
   & \leq C \Big( \int_\Omega |\rho - \bar{\rho}|^qdx \Big)^{\frac{2}{q}}+C \Big( \int_\Omega |n - \bar{n}|^qdx \Big)^{\frac{2}{q}}.
   \end{split}
\end{equation}

Our next goal is to show 
\begin{equation}\label{estim4}
\Big( \int_\Omega |\rho - \bar{\rho}|^qdx \Big)^{\frac{2}{q}} \leq C \int_\Omega h_1(\rho | \bar{\rho})dx.
\end{equation}
To this end, we split the domain into
$B(t)=\{x \in \Omega \ | \ 0 \leq \rho \leq R \}$ and $U(t) = \{x \in \Omega \ | \ \rho > R \},$ where $R > M_1 + 1$ is as in Lemma \ref{hlemma}.
First observe that $$\Big( \int_\Omega |\rho - \bar{\rho}|^q dx\Big)^{\frac{2}{q}} \leq  C \Big( \int_{B(t)} |\rho - \bar{\rho}|^q dx\Big)^{\frac{2}{q}} + C \Big( \int_{U(t)} |\rho - \bar{\rho}|^q dx\Big)^{\frac{2}{q}}.$$
Since $q < 2,$ the inclusion $L^2(\Omega) \subseteq L^q(\Omega)$ holds, and together with Lemma \ref{hlemma} implies
$$\Big( \int_{B(t)} |\rho - \bar{\rho}|^q dx\Big)^{\frac{2}{q}} \leq C \int_{B(t)} |\rho - \bar{\rho}|^2 dx \leq C \int_{\Omega} h_1(\rho | \bar{\rho})dx.$$ \\
Moreover, since $\frac{1}{q}=\frac{\theta}{\gamma}+(1-\theta)$ with $ 2\theta= \gamma$ one has 
\begin{equation*}
 \begin{split}
  \Big( \int_{U(t)} |\rho - \bar{\rho}|^q dx\Big)^{\frac{2}{q}} & = \Big( \int_{U(t)} |\rho - \bar{\rho}|^{(1-\theta) q} |\rho - \bar{\rho}|^{\theta q}dx \Big)^{\frac{2}{q}} \\
  & \leq \Bigg( \Big(  \int_{U(t)} |\rho - \bar{\rho}| dx \Big)^{(1-\theta)q} \Big( \int_{U(t)} |\rho - \bar{\rho}|^{\gamma}dx \Big)^{\frac{\theta q}{\gamma}}\Bigg)^{\frac{2}{q}} \\ 
  & \leq (M+\bar M)^{2 - \gamma}  \int_{U(t)} |\rho - \bar{\rho}|^\gamma dx.
   \end{split}
\end{equation*}  
If $\gamma = \gamma_1$, then $$\int_{U(t)} |\rho - \bar{\rho}|^\gamma dx \leq C \int_\Omega h_1(\rho | \bar{\rho})dx $$ immediately follows from Lemma \ref{hlemma}. \\
If $\gamma = \gamma_2$, then $\gamma \leq \gamma_1$, and since $|\rho-\bar{\rho}| > 1$ in $U(t)$, again by Lemma \ref{hlemma} one obtains 
$$
    \int_{U(t)} |\rho - \bar{\rho}|^\gamma dx  \leq  \int_{U(t)} |\rho - \bar{\rho}|^{\gamma_1} dx \leq C \int_\Omega h_1(\rho | \bar{\rho})dx.
  $$
Consequently, we obtain \eqref{estim4} as projected.

In a similar fashion, it holds
\begin{equation}\label{estim5}
\Big( \int_\Omega |n - \bar{n}|^qdx \Big)^{\frac{2}{q}} \leq C \int_\Omega h_2(n | \bar{n})dx.
\end{equation}
Then \eqref{estim3} in conjunction with  \eqref{estim4}, \eqref{estim5} gives
$$
J(t) \leq  C \int_\Omega h_1(\rho | \bar{\rho})+h_2(n|\bar{n})dx \leq C \Psi(t),
$$
wherefrom
$$ \mathcal{J}_3(t) = \int_0^t J(\tau) d\tau \leq  C \int_0^t \Psi(\tau) d\tau, $$
which completes the proof.
\end{proof}

\begin{lemma}\label{lemmaJ4}
Under the conditions of Proposition \ref{relativeentropy},
 $$
 \mathcal{J}_4(t) \leq \frac{1}{2} \int_0^t \int_\Omega \rho|u-\bar{u}|^2+n|v-\bar{v}|^2dxd \tau + C \varepsilon^2, \ \ \  t \in [0,T[,
 $$
 for some positive constant $C$.
\end{lemma}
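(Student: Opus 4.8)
The plan is to estimate $\mathcal{J}_4$ through a weighted Young inequality in which the weight is calibrated against $\varepsilon$: one piece is absorbed by the dissipation integral $\int_0^t\!\int_\Omega \rho|u-\bar u|^2 + n|v-\bar v|^2\,dxd\tau$ appearing on the left of \eqref{relativeentropyinequality}, and the other piece is controlled by a constant times $\varepsilon^2$ using mass conservation and the lower bounds on $\bar\rho,\bar n$.

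First I would symmetrize the density factor in each integrand, writing $\tfrac{\rho}{\bar\rho}\,\bar e_1\cdot(u-\bar u) = \big(\sqrt{\rho}\,(u-\bar u)\big)\cdot\big(\tfrac{\sqrt\rho}{\bar\rho}\bar e_1\big)$ and likewise for the $n$--term. Applying $|ab|\le \tfrac{\mu}{2}a^2+\tfrac{1}{2\mu}b^2$ pointwise with $a=\sqrt\rho\,|u-\bar u|$ and $b=\tfrac{\sqrt\rho}{\bar\rho}|\bar e_1|$, one gets
\begin{equation*}
 \varepsilon\Big|\tfrac{\rho}{\bar\rho}\bar e_1\cdot(u-\bar u)\Big| \le \frac{\varepsilon\mu}{2}\,\rho|u-\bar u|^2 + \frac{\varepsilon}{2\mu}\,\frac{\rho}{\bar\rho^2}|\bar e_1|^2 .
\end{equation*}
The decisive choice is $\mu=1/\varepsilon$, which turns the first coefficient into exactly $\tfrac12$ (matching the target) and the second into $\tfrac{\varepsilon^2}{2}$. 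Summing the $\rho$-- and $n$--contributions and integrating in space--time then yields the dissipative half plus $\tfrac{\varepsilon^2}{2}\int_0^t\!\int_\Omega \tfrac{\rho}{\bar\rho^2}|\bar e_1|^2 + \tfrac{n}{\bar n^2}|\bar e_2|^2\,dxd\tau$.

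It remains to show the last integral is bounded by a constant independent of $\varepsilon$ and $t$. Here I would invoke hypothesis \textbf{(H)}, which gives $\bar\rho\ge\delta_1$ and $\bar n\ge\delta_2$, so that $\bar\rho^{-2}\le\delta_1^{-2}$ and $\bar n^{-2}\le\delta_2^{-2}$, together with $\bar e_1,\bar e_2\in \big(L^\infty(]0,T[\times\Omega)\big)^d$ and the conservation of mass \eqref{massconservation}, namely $\int_\Omega\rho\,dx=\int_\Omega n\,dx=M$. These combine to bound the integral by $T\big(\delta_1^{-2}\|\bar e_1\|_\infty^2 + \delta_2^{-2}\|\bar e_2\|_\infty^2\big)M$, a fixed constant $C$. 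Collecting the estimates gives the claimed inequality with $C$ of this form.

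There is no serious obstacle here; the one point requiring care is the calibration $\mu=1/\varepsilon$ of the Young weight, which is precisely what converts the naive $O(\varepsilon)$ residual into the $O(\varepsilon^2)$ term needed for Theorem \ref{mainresult}. The other structural ingredient is the use of mass conservation (rather than an $L^2$ bound on $\rho$, which is not available in the regularity class of Definition \ref{weakformulation}) to tame the $\rho/\bar\rho^2$ factor, made possible by the lower bound on $\bar\rho$ supplied by \textbf{(H)}.
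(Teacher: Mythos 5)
Your argument is correct and coincides with the paper's own proof: both apply the weighted Young inequality with weight $1/\varepsilon$ to split $\mathcal{J}_4$ into $\tfrac12\int_0^t\!\int_\Omega \rho|u-\bar u|^2+n|v-\bar v|^2\,dx\,d\tau$ plus $\tfrac{\varepsilon^2}{2}\int_0^t\!\int_\Omega \rho|\bar e_1/\bar\rho|^2+n|\bar e_2/\bar n|^2\,dx\,d\tau$, and then bound the latter by $C\varepsilon^2 t$ using \textbf{(H)}, the $L^\infty$ bounds on $\bar e_1,\bar e_2$, and mass conservation. No differences worth noting.
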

\begin{proof}
The boundedness of $\bar{e}_1$ and $\bar{e}_2$, the conservation of mass and $\textbf{(H)}$ imply, for $t \in [0,T[$, that
  \begin{equation*}
  \begin{split}
    \mathcal{J}_4(t) &=  - \varepsilon \int_0^t \int_\Omega \frac{\rho}{\bar{\rho}} \bar{e}_1 \cdot (u - \bar{u}) + \frac{n}{\bar{n}} \bar{e}_2 \cdot (v - \bar{v}) dx d\tau \\
   & \leq \frac{1}{2 } \int_0^t \int_\Omega \rho|u-\bar{u}|^2+n|v-\bar{v}|^2 dxd\tau+\frac{\varepsilon^2}{2}\int_0^t \int_\Omega \rho \bigg|\frac{\bar{e}_1}{\bar{\rho}}\bigg|^2+n \bigg|\frac{\bar{e}_2}{\bar{n}}\bigg|^2dxd\tau\\
   & \leq \frac{1}{2 } \int_0^t \int_\Omega \rho|u-\bar{u}|^2+n|v-\bar{v}|^2 dxd\tau + C \varepsilon^2 t\\
  \end{split}
\end{equation*}
for some positive constant $C$.
\end{proof}

Combining (\ref{relativeentropyinequality}) with the bounds in  Lemmas \ref{lemmaJ1}-\ref{lemmaJ4} gives
 \begin{equation} \label{REafterbounds}
  \Psi(t)+\frac{1}{2} \int_0^t \int_\Omega \rho|u-\bar{u}|^2+n|v-\bar{v}|^2dxd \tau \leq \Psi(0) + C \int_0^t \Psi(\tau)d\tau + C \varepsilon^2 t,
   \  \ t \in [0,T[.
 \end{equation}
Theorem \ref{mainresult} follows by the Gronwall inequality. The constant $C$ depends on $d$, $\Omega,$ $\gamma_1,$ $\gamma_2,$ 
$k_1,$ $k_2,$ $\hat{k}_1,$ $\hat{k}_2,$ $M$, $\bar M$, 
$\delta_1,$ $\delta_2,$ $M_1,$ $M_2,$ $||\bar{u}||_\infty,$ $||\bar{v}||_\infty,$ $||\nabla \bar{u}||_\infty,$ $||\nabla \bar{v}||_\infty,$ 
$||\bar{e}_1||_\infty$ and $||\bar{e}_2||_\infty.$

\appendix
\section*{Appendix}
\setcounter{equation}{0}
\renewcommand\thesection{A}

This section provides a formal derivation of system (\ref{EP2}) from the bipolar Boltzmann-Poisson model with friction terms adapted from \cite{lenard}. \par
Consider the system 
\begin{equation} \label{boltz}
\begin{dcases}
 \frac{\partial f}{\partial t}  + w \cdot \nabla f - \nabla \phi \cdot \nabla_w f  = \nabla_w \cdot \Big(\frac{1}{\tau} wf \Big) \\
  \frac{\partial g}{\partial t}  + w \cdot \nabla g + \nabla \phi \cdot \nabla_w g  = \nabla_w \cdot \Big(\frac{1}{\tau} wg \Big) \\
  - \Delta \phi = \int f dw - \int g dw
\end{dcases}
 \end{equation}
in the phase space-time $]0,+\infty[ \ \times\ \mathbb{R}^d \times \mathbb{R}^d,$ where $w \in \mathbb{R}^d$ is called the pseudo-wave vector, and $\nabla_w$ is the gradient operator with respect to $w.$ One can visualize the functions $f = f(t,x,w)$ and $g=g(t,x,w)$ as distribution functions of sets of electrons and holes, respectively. It is assumed that $f$ and $g$ satisfy 
\begin{equation} \label{boltzbc}
\lim\limits_{|w| \to + \infty} f(t,x,w) = \lim\limits_{|w| \to + \infty} g(t,x,w)= 0.
\end{equation}
Moreover, $\phi = \phi(t,x)$ is the electrostatic potential of the system and $1/\tau > 0$ is the effective collision frequency. \par
Set $$\rho = \int f dw, \ u = \frac{1}{\rho} \int wf dw ,$$
and 
$$n = \int g dw, \ v = \frac{1}{n} \int wg dw.$$
Integrating the first equation of (\ref{boltz}) with respect to the variable $w$ gives 
$$\frac{\partial}{\partial t} \int f dw+ \nabla \cdot \int w f dw = \nabla \phi \cdot \int \nabla_w f dw + \int \nabla_w \cdot \Big(\frac{1}{\tau} wf \Big) dw,$$
whence, using (\ref{boltzbc}), 
\begin{equation} \label{boltz5}
\rho_t + \nabla \cdot (\rho u) = 0. 
\end{equation}
Similarly one obtains 
\begin{equation} \label{boltz6}
n_t + \nabla \cdot (nv) = 0. 
\end{equation}
\par
Next, the momentum equations are deduced. Multiplying the first equation of (\ref{boltz}) by $w_j$ and integrating with respect to $w$ yields 
\begin{equation} \label{boltz2}
 \frac{\partial}{\partial t} \int w_j f dw+ \sum_{i=1}^d \frac{\partial}{\partial x_i} \Big( \int w_j w_i f dw \Big) = \nabla \phi \cdot \int w_j \nabla_w f dw + \int w_j \nabla_w \cdot \Big(\frac{1}{\tau} wf \Big) dw.
\end{equation}
Setting $$z_{ij} = \frac{1}{\rho} \int w_j w_i f dw,$$ one can rewrite (\ref{boltz2}) as 
\begin{equation} \label{boltz3}
 (\rho u_j)_t + \sum_{i=1}^d \frac{\partial}{\partial x_i} \big( \rho z_{ij} \big) = - \rho \frac{\partial \phi}{\partial x_j} - \frac{1}{\tau} \rho u_j.
\end{equation} \par
Defining $\sigma_{ij}=z_{ij} - u_i u_j$, after subtracting equation $$u_j \Big(\rho_t + \sum_{i=1}^d \frac{\partial}{\partial x_i} \big(\rho u_i \big) \Big) = 0 $$ from equation (\ref{boltz3}), one obtains
\begin{equation} \label{boltz4}
 \rho \frac{\partial u_j}{\partial t}  + \rho u \cdot \nabla u_j + \sum_{i=1}^d \frac{\partial}{\partial x_i} \big(\rho \sigma_{ij} \big) = - \rho \frac{\partial \phi}{\partial x_j} - \frac{1}{\tau} \rho u_j.
\end{equation} \par
Assume that $\rho \sigma_{ij}$ is a stress tensor that represents a pressure. Precisely, $\rho \sigma_{ij} = \delta_{ij} p_1(\rho)$ where $p_1$ is a function that symbolizes the pressure. Thus, $$\sum_{i=1}^d \frac{\partial}{\partial x_i} \big(\rho \sigma_{ij} \big) = \frac{\partial}{\partial x_j} \big( p_1(\rho) \big) $$
and the vector form of (\ref{boltz4}) can be written as 
\begin{equation} \label{boltz7}
 (\rho u)_t  + \nabla \cdot (\rho u \otimes u) + \nabla p_1(\rho) = - \rho \nabla \phi -\frac{1}{\tau} \rho u. 
\end{equation}
Analogously,
\begin{equation} \label{boltz8}
 (n v)_t + \nabla \cdot (n v \otimes v)+\nabla p_2(n) = n \nabla \phi - \frac{1}{\tau} nv,
\end{equation}
where again $p_2$ represents a pressure.
\par 
The third equation of (\ref{boltz}) together with equations (\ref{boltz5}), (\ref{boltz6}), (\ref{boltz7}) and (\ref{boltz8}) give system (\ref{EP2}).
\section*{Acknowledgement}
The authors wish to express their graditude to the anonymous referee for the suggestions that undoubtedly helped to improve the quality of the manuscript. The first author would also like to thank Rogerio Jorge and Xiaokai Huo for helpful discussions.

\end{document}